\newtheorem{lemma}{Lemma}[section]
\newtheorem{proposition}{Proposition}[section]
\newtheorem{cor}{Corollary}[section]
\theoremstyle{definition}
\newtheorem{example}{Example}
\begin{document}

\title{On some branches of the Bruhat-Tits tree}


\author{{\sc Luis Arenas-Carmona} and\\
{\sc Ignacio Saavedra}}


\newcommand\Q{\mathbb Q}
\newcommand{\h}{\mathfrak{H}}
\newcommand\alge{\mathfrak{A}}
\newcommand\Da{\mathfrak{D}}
\newcommand\Ea{\mathfrak{E}}
\newcommand\Ha{\mathfrak{H}}
\newcommand\oink{\mathcal O}
\newcommand\matrici{\mathbb{M}}
\newcommand\Txi{\lceil}
\newcommand\ad{\mathbb{A}}
\newcommand\enteri{\mathbb Z}
\newcommand\finitum{\mathbb{F}}
\newcommand\bbmatrix[4]{\left(\begin{array}{cc}#1&#2\\#3&#4\end{array}\right)}
\newcommand\lbmatrix[4]{\textnormal{\scriptsize{$\left(\begin{array}{cc}#1&#2\\#3&#4
\end{array}\right)$}}\normalsize}
\newcommand\lcvector[2]{\textnormal{\scriptsize{$\left(\begin{array}{c}#1\\#2\end{array}\right)$}}\normalsize}

\maketitle

\begin{abstract}
We give an algorithm to explicitly compute the largest subtree, in the local Bruhat-Tits tree for $\mathrm{PSL}_2(k)$,
whose vertices correspond to orders containing a given suborder $\Ha$, in terms of a set of generators for $\Ha$.
The shape of this subtree is described, when it is finite, by a set of two invariants. We use our method to provide a full
table for the invariants of an order generated by a pair of orthogonal pure quaternions. 
In a previous work, the first author showed that determining the shape of these local subtrees allows the computation of representation fields, a class field determining the set of isomorphism classes, in a genus $\mathbb{O}$ of orders of maximal rank in a fixed central simple algebra, containing an isomorphic copy of $\Ha$. Some further applications are described here.
\end{abstract}

\bigskip
\section{Introduction}

Let $k$ be a local  field, and let $\oink$ be the ring of integers of $k$.
Recall that the set of maximal orders in $\matrici_2(k)$ is in correspondence with the set of vertices in the Bruhat-Tits tree (or BT-tree) for $PSL_2(k)$ \cite{trees}, \cite{vigneras}. In \cite{eichler2} we proved that the subtree $S_0(\Ha)$ whose vertices are the maximal orders containing a given suborder lies in a rather restricted family. In fact, for most orders, the tree $S_0(\Ha)$, also called the branch of $\Ha$ in what follows,  is the maximal subtree whose orders lie no farther than a fixed distance, the depth $p=p(\Ha)$, from a path (Figure 1b), called the stem of $\Ha$, which can have length 0 (Figure 1a), be infinite in one (Figure 1c), or both ends (Figure 1d). Such a set is called a $p$-thick line, or a thick line if the depth is irrelevant. The exceptions are the following:
\begin{itemize}
\item If $\Ha$ is the rank 2 order generated by a nilpotent element, $S_0(\Ha)$ is an infinite leaf, which can be thought as a thick line with the stem at infinity (Figure 1e),
\item if $\Ha=\oink$, identified with the ring of integral scalar matrices, then $S_0(\Ha)$ is the whole BT-tree.
\end{itemize} 
In particular, for any order contained in finitely many maximal orders, as is the cases for orders of maximal rank, the branch $S_0(\Ha)$ is completely described by two invariants, the depth $p$, and the stem length $l=l(\Ha)$. In fact, the intersection of the maximal orders containing $\Ha$ is $$\Da^{[p]}=\oink\mathbb{I}+\pi^p\Da,$$ where $\Da$ is an Eichler order of level $l$, and $\pi$ is any uniformizing parameter of $k$. These intersections were originally described by Tu \cite{tu}.

\begin{figure}
\unitlength 1mm 
\linethickness{0.4pt}
\ifx\plotpoint\undefined\newsavebox{\plotpoint}\fi 
\begin{picture}(120,16)(0,0)
\put(10,14){\makebox(0,0)[cc]{a}}
\put(10,6){\line(0,1){4}}
\multiput(10,6)(-0.087,-.05){40}{\line(1,0){.0625}}
\multiput(10,6)(0.087,-.05){40}{\line(1,0){.0625}}
\multiput(10,10)(-0.087,.05){20}{\line(1,0){.0625}}
\multiput(10,10)(0.087,.05){20}{\line(1,0){.0625}}
\put(6.6,4){\line(0,-1){2}}
\multiput(6.6,4)(-0.087,.05){20}{\line(1,0){.0625}}
\put(13.4,4){\line(0,-1){2}}
\multiput(13.4,4)(0.087,.05){20}{\line(1,0){.0625}}
\put(31,14){\makebox(0,0)[cc]{b}}
\put(25,6){\line(1,0){4}}
\put(29,6){\line(0,1){4}}
\multiput(29,10)(-0.087,0.05){20}{\line(1,0){.0625}}
\multiput(29,10)(0.087,0.05){20}{\line(1,0){.0625}}
\multiput(25,6)(-0.05,-0.087){40}{\line(1,0){.0625}}
\multiput(25,6)(-0.05,0.087){40}{\line(1,0){.0625}}
\put(23,9.28){\line(-1,0){2}}
\put(23,2.72){\line(-1,0){2}}
\multiput(23,9.28)(0.05,0.087){20}{\line(1,0){.0625}}
\multiput(23,2.72)(0.05,-0.087){20}{\line(1,0){.0625}}
\put(29,6){\line(1,0){5}}
\put(34,6){\line(0,1){4}}
\multiput(34,10)(-0.087,0.05){20}{\line(1,0){.0625}}
\multiput(34,10)(0.087,0.05){20}{\line(1,0){.0625}}
\put(34,6){\line(1,0){4}}
\multiput(38,6)(0.05,-0.087){40}{\line(1,0){.0625}}
\multiput(38,6)(0.05,0.087){40}{\line(1,0){.0625}}
\put(40,9.28){\line(1,0){2}}
\put(40,2.72){\line(1,0){2}}
\multiput(40,9.28)(-0.05,0.087){20}{\line(1,0){.0625}}
\multiput(40,2.72)(-0.05,-0.087){20}{\line(1,0){.0625}}
\put(54,14){\makebox(0,0)[cc]{c}}
\put(50,6){\line(1,0){4}}
\put(54,6){\line(0,1){4}}
\multiput(54,10)(-0.087,0.05){20}{\line(1,0){.0625}}
\multiput(54,10)(0.087,0.05){20}{\line(1,0){.0625}}
\multiput(50,6)(-0.05,-0.087){40}{\line(1,0){.0625}}
\multiput(50,6)(-0.05,0.087){40}{\line(1,0){.0625}}
\put(48,9.28){\line(-1,0){2}}
\put(48,2.72){\line(-1,0){2}}
\multiput(48,9.28)(0.05,0.087){20}{\line(1,0){.0625}}
\multiput(48,2.72)(0.05,-0.087){20}{\line(1,0){.0625}}
\put(54,6){\line(1,0){5}}
\put(59,6){\line(0,1){4}}
\multiput(59,10)(-0.087,0.05){20}{\line(1,0){.0625}}
\multiput(59,10)(0.087,0.05){20}{\line(1,0){.0625}}
\multiput(60,6)(1,0){4}{{\rule{.4pt}{.4pt}}}
\put(76,14){\makebox(0,0)[cc]{d}}
\put(71,6){\line(0,1){4}}
\multiput(71,10)(-0.087,0.05){20}{\line(1,0){.0625}}
\multiput(71,10)(0.087,0.05){20}{\line(1,0){.0625}}
\multiput(71,6)(-1,0){4}{{\rule{.4pt}{.4pt}}}
\put(71,6){\line(1,0){5}}
\put(76,6){\line(0,1){4}}
\multiput(76,10)(-0.087,0.05){20}{\line(1,0){.0625}}
\multiput(76,10)(0.087,0.05){20}{\line(1,0){.0625}}
\put(76,6){\line(1,0){5}}
\put(81,6){\line(0,1){4}}
\multiput(81,10)(-0.087,0.05){20}{\line(1,0){.0625}}
\multiput(81,10)(0.087,0.05){20}{\line(1,0){.0625}}
\multiput(82,6)(1,0){4}{{\rule{.4pt}{.4pt}}}
\put(102,14){\makebox(0,0)[cc]{e}}
\put(94,6){\line(0,1){4}}
\multiput(94,10)(-0.087,0.05){20}{\line(1,0){.0625}}
\multiput(95.8,11)(0.087,-0.05){10}{\line(1,0){.0625}}
\put(95.8,11){\line(0,1){1}}
\multiput(94,10)(0.087,0.05){20}{\line(1,0){.0625}}
\multiput(92.2,11)(-0.087,-0.05){10}{\line(1,0){.0625}}
\put(92.2,11){\line(0,1){1}}
\multiput(94,6)(-1,0){4}{{\rule{.4pt}{.4pt}}}
\put(94,6){\line(1,0){8}}
\put(102,6){\line(0,1){4}}
\multiput(102,10)(-0.087,0.05){20}{\line(1,0){.0625}}
\multiput(102,10)(0.087,0.05){20}{\line(1,0){.0625}}
\put(102,6){\line(1,0){5}}
\put(107,6){\line(0,1){4}}
\put(107,6){\line(1,0){4}}
\end{picture}
\caption{Some examples of branches of orders when the residue field is
$\mathbb{K}=\oink/\pi\oink\cong\finitum_2$.} 
\end{figure}

In this work we show how these invariant can be computed in terms of a generating set for $\Ha$. As an example we compute all branches of orders of the form $\Ha=\oink[i,j]$ where $i$ and $j$ are a pair of orthogonal pure quaternions, the standard generators of a quaternion algebra. These computations are useful to study which Eichler orders represent a given suborder. A fundamental tool for this study is the representation field, whose definition is recalled in the last section. The relative spinor image, which permits the computation of representation fields, is easily expressed in terms of a third invariant, the diameter $d(\Ha)=l(\Ha)+2p(\Ha)$. It is the diameter of $S_0(\Ha)$ as a metric space with the canonical metric  in a tree. Other applications of the invariants are presented in \S5. For instance, we provide a formula for the number of maximal orders containing a given order of maximal rank, or any local order with finite invariants.

If $S$ is a thick line, we also denote its invariants by $d(S)$, $l(S)$ and $p(S)$. When the stem of $S$ is a maximal path, we write $d(S)=l(S)=\infty$. When the stem of $S$ is infinite in one direction only, we write $d(S)=l(S)=\infty/2$.
In all that follows, when $\Ha=\oink[a_1,\dots,a_n]$ we write $S_0(a_1,\dots,a_n)$ instead of $S_0(\Ha)$. Note that
$S_0(a_1,\dots,a_n)=\bigcap_{i=1}^nS_0(a_i)$.
This property is used in \cite{eichler2} to characterize branches of orders among all possible subtrees of the BT-tree. The intersection of every non-empty sub-collection of the sets on the right of this identity is a thick line or an infinity leaf,
so it suffices to describe the intersection of any two of these sets in terms of their relative positions. Then we need to determine these relative positions for explicit orders.  
This needs to be done by an ad-hoc argument in every specific case, but Lemma \ref{l31} is particularly useful there. The possible shapes for each branch $S=S_0(a_i)$ 
are fully described by the following list \cite[Cor. 4.3 and Prop. 4.4]{eichler2}:
\begin{enumerate}
\item If $L=k(a_i)$ is a field, then $S$ is a thick line with a stem of length 1 if $L/k$ is ramified and 0 otherwise.
\item If $L=k(a_i)\cong k\times k$, then $S$ is a thick line with $l(S)=\infty$.
\item If $k(a_i)$ contains a non-trivial nilpotent element, then $S$ is an infinite leaf (Fig. 1e). \end{enumerate}
Furthermore, in either of the first two cases, the depth $p=p(\oink[a_i]\big)$ is defined by the identity $\oink[a_i]=\oink_L^{[p]}=\oink+\pi^p\oink_L$, where $\oink_L$ is the maximal order of $L$.

\section{Explicit intersections}

Let $G$ be a graph and let $\delta$ be the usual distance on it. For any subgraph $S$ of $G$ we define
 $S^{[r]}=\bigcup_{x\in S}B[x;r]$, where $B[x;r]=B_\delta[x;r]$ is the closed ball of radius $r$ centered at $x$. 
The length of a path $\gamma$ is denoted $l(\gamma)$.
\begin{lemma}\label{lfund}
If $G$ is a tree, while $S_1$ and $S_2$ are two subtrees with non-empty intersection, we have
$(S_1\cap S_2)^{[r]}=S_1^{[r]}\cap S_2^{[r]}$.
\end{lemma}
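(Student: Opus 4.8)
The plan is to prove the non-trivial inclusion by means of the nearest‑point projection (the ``gate'') onto a subtree. The easy half, $(S_1\cap S_2)^{[r]}\subseteq S_1^{[r]}\cap S_2^{[r]}$, requires nothing: if $\delta(y,z)\le r$ for some $z\in S_1\cap S_2$, then $z\in S_1$ and $z\in S_2$ exhibit $y$ in both $S_1^{[r]}$ and $S_2^{[r]}$; this holds in an arbitrary graph. So the whole point is the reverse inclusion, and this is where acyclicity of $G$ is used. Note first that $T:=S_1\cap S_2$ is itself a non-empty subtree: by hypothesis it is non-empty, and since geodesics in $G$ are unique, the geodesic joining any two of its vertices lies in both $S_1$ and $S_2$, hence in $T$ (this is just convexity of subtrees in a tree).

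Next I would record the gate property: in a tree, for a non-empty subtree $S$ and a vertex $y$ there is a unique vertex $\pi_S(y)\in S$ nearest to $y$, and it lies on every geodesic from $y$ to a vertex of $S$. This is the standard one-line median argument — if $p$ is a nearest vertex and $s\in S$, the median of $\{y,p,s\}$ lies on $[p,s]\subseteq S$ and on $[y,p]$, so by minimality it equals $p$, i.e. $p\in[y,s]$; uniqueness is immediate from the same additivity. With this in hand, fix $y\in S_1^{[r]}\cap S_2^{[r]}$, so that $\delta(y,\pi_{S_1}(y))\le r$ and $\delta(y,\pi_{S_2}(y))\le r$, and set $z=\pi_T(y)$, $p_i=\pi_{S_i}(y)$. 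Because $z\in T\subseteq S_i$, the gate property for $S_i$ forces $p_i$ to lie on the geodesic $[y,z]$; thus $y,p_1,p_2,z$ all lie on one geodesic and are linearly ordered by distance from $y$. Relabelling so that $\delta(y,p_1)\le\delta(y,p_2)$, the vertex $p_2$ lies on the subgeodesic $[p_1,z]$, which is contained in $S_1$ by convexity (both endpoints $p_1$ and $z$ are in $S_1$); since also $p_2\in S_2$, we get $p_2\in S_1\cap S_2=T$. But $z$ is the nearest vertex of $T$ to $y$ while $p_2\in T$ sits on $[y,z]$, so $\delta(y,z)\le\delta(y,p_2)\le\delta(y,z)$ and hence $p_2=z$. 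Therefore $\delta(y,z)=\delta(y,p_2)\le r$, i.e. $y\in T^{[r]}=(S_1\cap S_2)^{[r]}$, completing the proof.

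I expect the only real obstacle to be stating and justifying the gate property and the convexity of subtrees cleanly (and perhaps pointing to a reference such as \cite{trees} rather than reproving them); once those are available, the combinatorial core above is short. If one wants the asserted equality as an identity of induced subgraphs rather than merely of vertex sets, it is automatic, since both sides are by definition the subgraphs of $G$ induced on the equal vertex sets just computed.
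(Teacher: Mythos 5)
Your proof is correct and follows essentially the same route as the paper's: both establish the nontrivial inclusion by showing that the nearest point of one of the two subtrees to $y$ already lies in $S_1\cap S_2$ (and is in fact its nearest point there), using uniqueness of geodesics in a tree. Your packaging via the gate property $\pi_S(y)\in[y,s]$ for all $s\in S$ is just a cleaner way of organizing the paper's case analysis on which of the two connecting paths passes through the other's endpoints.
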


\begin{proof}
It is clear that $(S_1\cap S_2)^{[r]}\subseteq S_1^{[r]}\cap S_2^{[r]}$, so we prove the opposite inclusion.
First observe that since the path joining two vertices in a tree is unique, if $S_1$ and $S_2$ are path-connected so is their intersection.  Now assume $x\in  S_1^{[r]}\cap S_2^{[r]}$. Let $\alpha_i$ be the shortest path joining $x$ to a point
$y_i\in S_i$ (Fig. 2A), so that $l(\alpha_i)=\delta(x,y_i)\leq r$. Let $\gamma_i$ be the shortest path (in $S_i$) joining $y_i$ to a point $z_i\in S_1\cap S_2$. Let $\beta$ be the path (in $S_1\cap S_2$)
joining $z_1$ and $z_2$. By the uniqueness of paths either of the following statements hold:
\begin{itemize}\item $\alpha_2$ passes through $y_1$, $z_1$, and $z_2$
(Fig. 2B), and therefore $l(\beta)=l(\gamma_2)=0$ and $z_1=z_2=y_2$ by definition of $\alpha_2$, or
 \item $\alpha_1$ passes through $y_2$, $z_2$, and $z_1$ (Fig. 2C),
and therefore $l(\beta)=l(\gamma_1)=0$ and $y_1=z_1=z_2$. 
\end{itemize}
In the first case, the distance from $x$ to  $z_1=y_2$ is $l(\alpha_2)\leq r$. The remaining case is analogous.
\end{proof}
\begin{figure}
\[ 
\includegraphics[width=1\textwidth]{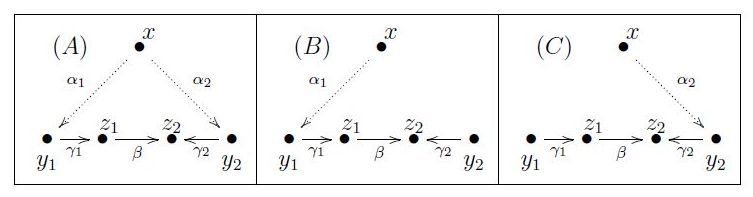}
\]
\caption{Paths in the proof of Lemma \ref{lfund}. In (B) $\alpha_2$ equals the juxtaposition $\alpha_1*\gamma_1*\beta*\gamma_2^{-1}$. In (C) we have 
$\alpha_1=\alpha_2*\gamma_2*\beta^{-1}*\gamma_1^{-1}$.} 
\end{figure}

\begin{cor}
In the notations of the previous lemma, if $S_1\cap S_2$ is a $p$-thick line, then for every
positive integer $t$, the intersection $S_1^{[t]}\cap S_2^{[t]}$
is a $(p+t)$-thick line with the same stem. 
\end{cor}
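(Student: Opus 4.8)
The plan is to reduce the statement entirely to Lemma~\ref{lfund} together with one elementary identity about neighborhoods of paths. Since $S_1\cap S_2$ is a $p$-thick line it is in particular non-empty, so Lemma~\ref{lfund} applies with $r=t$ and gives $S_1^{[t]}\cap S_2^{[t]}=(S_1\cap S_2)^{[t]}$. Writing $S:=S_1\cap S_2$, it therefore suffices to prove that if $S$ is a $p$-thick line with stem $\gamma$, then $S^{[t]}$ is a $(p+t)$-thick line with the same stem $\gamma$.

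By definition, a $p$-thick line with stem $\gamma$ is the maximal subtree all of whose vertices lie within distance $p$ of the path $\gamma$; in the notation of this section this is exactly $S=\gamma^{[p]}=\{x\in G:\delta(x,\gamma)\le p\}$, where $\gamma$ is a path that may reduce to a single vertex or be infinite in one or both directions. The core step is then the identity $(\gamma^{[p]})^{[t]}=\gamma^{[p+t]}$. The inclusion $\subseteq$ is immediate from the triangle inequality: if $\delta(x,\gamma)\le p$ and $\delta(z,x)\le t$ then $\delta(z,\gamma)\le p+t$. For $\supseteq$, take a vertex $z$ with $\delta(z,\gamma)\le p+t$, pick $y\in\gamma$ with $\delta(y,z)\le p+t$, and let $x$ be the vertex on the geodesic from $y$ to $z$ at distance $\min(p,\delta(y,z))$ from $y$; then $\delta(x,\gamma)\le\delta(x,y)\le p$, so $x\in\gamma^{[p]}$, while $\delta(x,z)=\delta(y,z)-\delta(x,y)\le t$, whence $z\in(\gamma^{[p]})^{[t]}$. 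Only the existence of geodesics is needed for this identity; the tree hypothesis is what makes $S^{[t]}$ coincide with the intersection via Lemma~\ref{lfund} and makes all the neighborhoods in sight genuine subtrees.

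Combining these steps, $S^{[t]}=(\gamma^{[p]})^{[t]}=\gamma^{[p+t]}=\{x\in G:\delta(x,\gamma)\le p+t\}$, which is precisely the maximal subtree whose vertices lie within distance $p+t$ of $\gamma$, i.e. a $(p+t)$-thick line with stem $\gamma$; since $\gamma$ was the stem of $S$, this is the same stem. I do not expect a genuine obstacle in this argument. The only points that need attention are bookkeeping ones: verifying that the degenerate shapes of the stem (a single vertex, a ray, or a bi-infinite line, cf. Figure~1a--d) are all handled uniformly by the same geodesic argument, and noting that $\gamma^{[p+t]}$ is automatically \emph{maximal} among subtrees within distance $p+t$ of $\gamma$, not merely one such subtree, since it is by construction the full metric neighborhood.
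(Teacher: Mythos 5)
Your proof is correct and follows exactly the route the paper intends: the corollary is stated without proof as an immediate consequence of Lemma~\ref{lfund}, namely $S_1^{[t]}\cap S_2^{[t]}=(S_1\cap S_2)^{[t]}=(\gamma^{[p]})^{[t]}=\gamma^{[p+t]}$, which is what you spell out. Your explicit verification of $(\gamma^{[p]})^{[t]}=\gamma^{[p+t]}$ via geodesics is the only content the authors left implicit, and it is handled correctly.
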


\begin{lemma}\label{lfund2}
If $G$ is a tree, while $S_1$ and $S_2$ are two subtrees satisfying $S_1\cap S_2=\emptyset$ and
$S_1^{[1]}\cap S_2^{[1]}\neq\emptyset$, then $S_1^{[1]}\cap S_2^{[1]}$ is either a point or a path
of length $1$. 
\end{lemma}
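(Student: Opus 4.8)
The plan is to analyse the geometry of two disjoint subtrees of a tree via nearest-point projections (``gates''). First I would establish the structural fact that if $S_1$ and $S_2$ are disjoint subtrees of a tree $G$, there is a well-defined gate map $g_i\colon G\to S_i$ sending a vertex to its unique closest vertex in $S_i$, characterised by the property that $[x,g_i(x)]\cup[g_i(x),s]$ is a geodesic for every $s\in S_i$. Using that $S_1$ and $S_2$ are path-connected and disjoint, one shows $g_1$ is constant on $S_2$ and $g_2$ is constant on $S_1$; denote the two values by $u_1\in S_1$ and $u_2\in S_2$. It then follows that $[u_1,u_2]$ is a geodesic, $D:=\delta(u_1,u_2)=\delta(S_1,S_2)\ge 1$, every path from a vertex of $S_1$ to a vertex of $S_2$ traverses all of $[u_1,u_2]$, and for $x\in S_i$ one has $\delta(x,S_j)=\delta(x,u_i)+D$. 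Verifying that $g_1|_{S_2}$ is constant is the technical heart: if it took two values $a\ne b$, then for suitable $x,x'\in S_2$ the juxtaposition of the geodesics $[x,a]$, $[a,b]$ and $[b,x']$ is again a geodesic (no backtracking at $a$ or $b$, by the gate property), hence equals $[x,x']\subseteq S_2$, which would put $a\in S_1\cap S_2=\emptyset$.

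Granting this, for every vertex $z$ one has $\delta(z,S_1)+\delta(z,S_2)\ge\delta(S_1,S_2)=D$, so the hypothesis $S_1^{[1]}\cap S_2^{[1]}\ne\emptyset$ forces $D\le 2$, hence $D\in\{1,2\}$. I would then determine $T:=S_1^{[1]}\cap S_2^{[1]}$ outright, which is more than enough. If $z\in T\cap S_1$, then $\delta(z,S_2)=\delta(z,u_1)+D\le 1$ forces $D=1$ and $z=u_1$; symmetrically $z\in T\cap S_2$ forces $D=1$ and $z=u_2$. If instead $z\in T$ but $z\notin S_1\cup S_2$, then $z$ is adjacent to $a:=g_1(z)\in S_1$ and to $b:=g_2(z)\in S_2$; since $a\ne b$, the length-$2$ path from $a$ to $b$ through $z$ is the geodesic $[a,b]$, and writing $[a,b]=[a,u_1]\cup[u_1,u_2]\cup[u_2,b]$ yields $\delta(a,u_1)+D+\delta(u_2,b)=2$.

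Finally I would read off the conclusion from this identity. When $D=2$ it forces $a=u_1$ and $b=u_2$, so $z$ is the midpoint of $[u_1,u_2]$ and is uniquely determined; combined with the first case this gives that $T$ consists of that single vertex. When $D=1$ it forces $\delta(a,u_1)+\delta(u_2,b)=1$, and a short check of the two sub-possibilities shows $z$ would have to equal $u_1$ or $u_2$, contradicting $z\notin S_1\cup S_2$; hence in this case $T=\{u_1,u_2\}$, a path of length $1$. The obstacle I anticipate is not any single computation but getting the gate formalism set up cleanly — especially the constancy of $g_1|_{S_2}$ — and then being careful, in the case $z\notin S_1\cup S_2$, to extract the exact location of $z$ from the length bookkeeping along $[a,b]$.
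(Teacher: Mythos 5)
Your proof is correct, and it takes a more systematic route than the paper's. The paper argues locally: it picks an arbitrary $x\in S_1^{[1]}\cap S_2^{[1]}$, notes that $x\notin S_1$ (say), so that $x$ has a unique neighbor $y$ in $S_1$, and then splits according to whether $y$ lies in $S_2^{[1]}$ or not, reading off $\{x\}$ or $\{x,y\}$ as the intersection. You instead build the global bridge structure first: the gate maps, the pair $(u_1,u_2)$ realizing $D=\delta(S_1,S_2)$, the fact that every geodesic from $S_1$ to $S_2$ contains $[u_1,u_2]$, and the formula $\delta(x,S_j)=\delta(x,u_i)+D$ for $x\in S_i$. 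This is heavier to set up (your constancy argument for $g_1|_{S_2}$ is the key technical point, and it is sound: no backtracking at $a$ or at $b$ by the gate property, so the concatenation is the geodesic $[x,x']\subseteq S_2$, forcing $a\in S_1\cap S_2$), but it buys you more: you pin down the intersection exactly as the midpoint of the bridge when $D=2$ and as $\{u_1,u_2\}$ when $D=1$, rather than merely its shape, and along the way you essentially re-derive the paper's Lemma \ref{l31} (that $S_1^{[t_1]}$ and $S_2^{[t_2]}$ meet iff $t_1+t_2\geq D$) and the bridge picture the paper uses later in the proof of Proposition \ref{inter1}. Your version is also more complete than the paper's, which asserts without argument that no vertices other than $x$ and $y$ can lie in the intersection; your bookkeeping along $[a,b]=[a,u_1]\cup[u_1,u_2]\cup[u_2,b]$ closes that gap explicitly.
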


\begin{proof}
Let $x\in S_1^{[1]}\cap S_2^{[1]}$. Then $x\notin S_1\cap S_2$ as the latter set is empty, whence $x\notin S_1$ or $x\notin S_2$.
If  $x\notin S_1$, then $x$ is an endpoint of  $S_1^{[1]}$. We conclude that $x$ has a unique neighbor $y$ in $S_1$.
In particular, $y$ is not in $S_2$, whence $y$ is either an endpoint of  $S_2^{[1]}$, or $y\notin  S_2^{[1]}$.
In the first case, $S_1^{[1]}\cap S_2^{[1]}$ is the path joining $x$ and $y$, while in the latter case
$S_1^{[1]}\cap S_2^{[1]}=\{x\}$.
\end{proof}

In all that follows we write $[\alpha]$ and $\{\alpha\}=\alpha-[\alpha]$ for the integral part and the fractional part, respectively, of a real number $\alpha$. We also use the conventions $\mathrm{min}(\alpha,\infty/2)=\alpha$,
$\alpha+\infty/2=\infty/2$, and $\infty/2+\infty/2=\infty$.

\begin{proposition}\label{inter1}
Let $S_1$ and $S_2$ be two thick lines whose stems $T_1$ and $T_2$ lie at a distance $e>0$ from each other, and
let $a$, $b$, $c$, and $d$, be the length of the segments of the stems determined by the unique path joining them as in Fig. 3A. Let $l_1=a+b$ and $l_2=c+d$ be the stem length of $S_1$ and $S_2$, and let $p_1$ and $p_2$ be the corresponding depths. Then:
\begin{enumerate}
\item if $e>|p_1-p_2|$, then $S_3=S_1\cap S_2$ is a thick line with invariants $p_3=\left[\frac{p_1+p_2-e}2\right]$
and $l_3=2\left\{\frac{p_1+p_2-e}2\right\}\in\{0,1\}$,
\item if $0<e\leq p_1-p_2$, then $S_3=S_1\cap S_2$ is a thick line with invariants $p_3=p_2$
and $l_3=\mathrm{min}\{\tau,c\}+\mathrm{min}\{\tau,d\}$, where $\tau=p_1-p_2-e$,
\item if $0<e\leq p_2-p_1$, then $S_3=S_1\cap S_2$ is a thick line with invariants $p_3=p_1$
and $l_3=\mathrm{min}\{\mu,a\}+\mathrm{min}\{\mu,b\}$, where $\mu=p_2-p_1-e$.
'\end{enumerate}
\end{proposition}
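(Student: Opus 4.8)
The plan is to reduce to the case where one of the thick lines has depth~$0$ via the corollary to Lemma~\ref{lfund}, and then to compute the resulting small intersection directly. By the symmetry between $S_1$ and $S_2$, I may assume $p_1\ge p_2$; then case (3) ($0<e\le p_2-p_1$) cannot occur, and cases (2) and (3) are interchanged by this relabelling, so it suffices to treat (1) and (2). Set $A=T_1^{[p_1-p_2]}$ and $B=T_2$: these are thick lines with stems $T_1,T_2$ and depths $p_1-p_2$ and $0$, and since $(S^{[r]})^{[s]}=S^{[r+s]}$ in a tree we have $S_1=A^{[p_2]}$, $S_2=B^{[p_2]}$, so $S_1\cap S_2=A^{[p_2]}\cap B^{[p_2]}$; the corollary will then produce $S_1\cap S_2$ from $A\cap B$ (or from a shrunk version of it) together with its stem.

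Cases (2) and (3) are essentially bookkeeping. If $0<e\le p_1-p_2$, the geodesic from a vertex $y\in T_2$ to $T_1$ passes through $u_2$, so $\delta(y,T_1)=e+\delta(y,u_2)$, and hence $A\cap B=\{y\in T_2:\delta(y,u_2)\le\tau\}$, $\tau=p_1-p_2-e$, is the subpath of $T_2$ through $u_2$ whose two arms have lengths $\min\{\tau,c\}$ and $\min\{\tau,d\}$ (each arm cut off by the length of the corresponding half of $T_2$). This is a $0$-thick line whose stem is itself, so the corollary with $r=p_2$ gives that $S_1\cap S_2$ is a $p_2$-thick line with the same stem, as asserted in (2); case (3) follows by interchanging the indices.

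Case (1), $e>p_1-p_2$, carries the real content. Here $\delta(y,A)=f+\delta(y,u_2)>0$ for $y\in T_2$ with $f:=e-(p_1-p_2)\ge1$, so $A\cap B=\emptyset$ and $\delta(A,B)=f$, realised by the subsegment $[a^*,u_2]$ of $\rho$ of length $f$. If $p_1+p_2<e$, equivalently $p_2<\lceil f/2\rceil$, then $S_1\cap S_2=\emptyset$ by the elementary bound $\delta(x,T_1)+\delta(x,T_2)\ge\delta(T_1,T_2)=e$, and the statement implicitly excludes this; otherwise put $q=p_2-\lceil f/2\rceil\ge0$, $A'=A^{[\lceil f/2\rceil]}$ and $B'=B^{[\lceil f/2\rceil]}$, so that $S_1\cap S_2={A'}^{[q]}\cap {B'}^{[q]}$. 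Shrinking once more to $A''=A^{[\lceil f/2\rceil-1]}$ and $B''=B^{[\lceil f/2\rceil-1]}$, one has $\delta(A'',B'')=f-2(\lceil f/2\rceil-1)\in\{1,2\}$, whence $A''\cap B''=\emptyset$ but ${A''}^{[1]}\cap {B''}^{[1]}=A'\cap B'\ne\emptyset$, and Lemma~\ref{lfund2} forces $A'\cap B'$ to be a single point or an edge. To identify it: for $\lfloor f/2\rfloor\le j\le\lceil f/2\rceil$ the point $w_j$ of $[a^*,u_2]$ at distance $j$ from $a^*$ satisfies $\delta(w_j,A)\le j\le\lceil f/2\rceil$ and $\delta(w_j,B)\le f-j\le\lceil f/2\rceil$, so $w_j\in A'\cap B'$; these points coincide if $f$ is even and form an edge if $f$ is odd, and in the odd case $A'\cap B'$ is exactly that edge. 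If $f$ is even I must still exclude an edge, and here every $x\in A'\cap B'$ has $\delta(x,A)=\delta(x,B)=f/2$ (combining the bound above with $\lceil f/2\rceil=f/2$), and such an $x$ is unique because it realises a bridge of length $f$ and the bridge between two disjoint subtrees of a tree is unique. Thus $A'\cap B'$ is a $0$-thick line whose stem is a point or an edge according to the parity of $f$, so the corollary with $r=q$ shows $S_1\cap S_2$ is a $q$-thick line with that stem. Finally $q=p_2-\lceil(e-p_1+p_2)/2\rceil=\lfloor(p_1+p_2-e)/2\rfloor=p_3$, and since $f\equiv p_1+p_2-e\pmod 2$ the stem length is $2\{(p_1+p_2-e)/2\}\in\{0,1\}$. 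The argument goes through verbatim when $T_1$ or $T_2$ is one-sidedly or bi-infinite, using the conventions $\min\{\alpha,\infty/2\}=\alpha$, etc.

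I expect the crux to be the parity analysis in case (1), specifically the claim that for even $f$ the ``just touching'' thick lines $A'$ and $B'$ meet in a single point rather than an edge. This is where the two easy but essential tree facts enter --- the inequality $\delta(x,C)+\delta(x,D)\ge\delta(C,D)$ for subtrees $C,D$, and the uniqueness of the shortest path joining two disjoint subtrees --- which I would establish first. The remaining ingredients are routine: $(S^{[r]})^{[s]}=S^{[r+s]}$, the formula $\delta(y,T_1)=e+\delta(y,u_2)$ for $y\in T_2$, the identity $\delta(A^{[r]},B^{[s]})=\max\{0,\delta(A,B)-r-s\}$ used to locate $A''$ and $B''$, and the arithmetic turning $p_2-\lceil f/2\rceil$ into $\lfloor(p_1+p_2-e)/2\rfloor$.
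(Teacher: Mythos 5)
Your proof is correct and follows essentially the same route as the paper: both decompose $S_i=U_i^{[p_2]}$ with $U_2=T_2$ a path (your $A$ and $B$ are exactly the paper's $U_1$ and $U_2$), handle cases (2)--(3) by intersecting $U_1$ with the path $T_2$ and re-thickening via the corollary to Lemma \ref{lfund}, and in case (1) use Lemma \ref{lfund2} plus that corollary to force the stem of $S_3$ to be a point or an edge. The only divergence is in the finishing step of case (1) — the paper extracts $p_3$ and $l_3$ from the diameter $d_3=p_1+p_2-e$ via $d=l+2p$, whereas you locate the stem directly on the bridge by a parity argument — and the two computations agree.
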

\begin{figure}
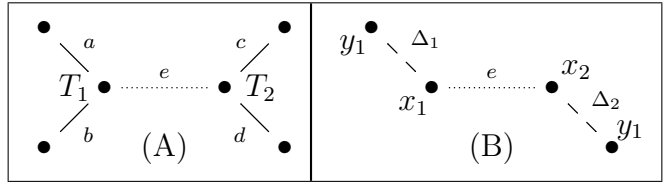

\[ \fbox{ \xygraph{
!{<0cm,0cm>;<.8cm,0cm>:<0cm,.8cm>::} !{(1,0) }*+{\bullet}="a" !{(0.5,0) }*+{T_1}="a1"
!{(3,0)}*+{\bullet}="b"  !{(3.6,0) }*+{T_2}="b1" !{(0,1) }*+{\bullet}="c"
!{(0,-1) }*+{\bullet}="d" !{(4,1)}*+{\bullet}="e"
!{(4,-1)}*+{\bullet}="f"
 !{(2,-1) }*+{\mathrm{(A)}}="z"
 "a"-@{.}^e"b" "a"-_a"c" "a"-^b"d" "b"-^c"e"
"b"-_d"f" } }
\fbox{ \xygraph{
!{<0cm,0cm>;<.8cm,0cm>:<0cm,.8cm>::} !{(1,0) }*+{\bullet}="a" !{(0.7,-0.3) }*+{x_1}="a1"
!{(3,0)}*+{\bullet}="b" !{(-0.3,0.7) }*+{y_1}="b1" !{(3.4,0.3) }*+{x_2}="b1" !{(0,1) }*+{\bullet}="c"
!{(4,-1)}*+{\bullet}="e"  !{(4.3,-0.7) }*+{y_1}="e1"  !{(2,-1) }*+{\mathrm{(B)}}="z"
 "a"-@{.}^e"b" "a"-@{--}_{\Delta_1}"c"  "b"-@{--}^{\Delta_2}"e"} }
\]
\caption{Disposition of the stems in Proposition \ref{inter1} (A), and  the vertices used in the proof (B).} 
\end{figure}

\begin{proof}
Assume first that $e>|p_1-p_2|$. Without loss of generality we can assume $p_2\leq p_1$. Now, we can write $S_i=U_i^{[p_2]}$,
for $i\in\{1,2\}$, where $U_2$ is a path, while $U_1$ is a $(p_1-p_2)$-thick line not intersecting $U_2$. We conclude that, for some positive integer $t\leq p_2$, we have  $U_1^{[t-1]}\cap U_2^{[t-1]}=\emptyset$ and $U_1^{[t]}\cap U_2^{[t]}\neq\emptyset$. We conclude from Lemma \ref{lfund2} and the corollary to Lemma \ref{lfund} that  $l(S_3)\in\{0,1\}$. Since the
invariants satisfy the relation $d(S_3)=l(S_3)+2p(S_3)$, it suffices to compute the diameter. We claim that the
diameter of $S_3$ is $d=p_1+p_2-e$. The result follows easily from the claim. Now let $x_1$ be the point of $T_1$ that is closest to $T_2$, and let $x_2$ be the point of $T_2$ that is closest to $T_1$. Let $y_1$ be a point in the BT-tree 
such that the path from $y_1$ to $x_2$  passes through $x_1$ and set $\delta(y_1,x_1)=\Delta_1$.
Define $y_2$ and $\Delta_2$ analogously as in
Figure 3B.  Then $y_1$ belongs to $S_2$ if and only if $\Delta_1\leq p_2-e$, and all points satisfying this condition are in $S_1$. Similarly, $y_2$ belongs to $S_1$ if and only if $\Delta_2\leq p_1-e$. The result follows.

Assume now that $0<e\leq p_1-p_2$. Then we can write $S_i=U_i^{[p_2]}$,
for $i\in\{1,2\}$, where $U_2=T_2$ is a path, while $U_1$ is a $(p_1-p_2)$-thick line intersecting $U_2$.
In this case $U_1\cap U_2$ is a path of length $l_3$ as in the statement, so the result follows. 
 The final case is analogous.
\end{proof}

\begin{proposition}\label{inter2}
Let $S_1$ and $S_2$ be two thick lines whose stems $T_1$ and $T_2$  intersect as shown in Figure 4, 
so that $l_1=a+e+b$ and $l_2=c+e+d$ are their respective stem lengths.
 Let $p_1$ and $p_2$ be the depths of $S_1$ and $S_2$, respectively. Then $S_3=S_1\cap S_2$ has invariants
$p_3=\mathrm{min}\{p_1,p_2\}$ and 
$$l_3=\left\{\begin{array}{rcl}
e+\mathrm{min}\{a,p_2-p_1\}+\mathrm{min}\{b,p_2-p_1\}&\textnormal{ if }&p_1\leq p_2,\\
e+\mathrm{min}\{c,p_1-p_2\}+\mathrm{min}\{d,p_1-p_2\}&\textnormal{ if }&p_2\leq p_1\end{array}\right..
$$
\end{proposition}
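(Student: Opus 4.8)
The plan is to follow the reduction strategy of Proposition \ref{inter1}: peel off $\min\{p_1,p_2\}$ layers of thickening from both thick lines, so that the problem collapses to computing the intersection of a genuine path with a thick line, which can then be handled by a direct distance computation in the tree. Assume without loss of generality that $p_1\le p_2$; the case $p_2\le p_1$ follows by interchanging the roles of $S_1$ and $S_2$, which swaps the triples $(a,b,p_1)$ and $(c,d,p_2)$ and exchanges the two cases of the stated formula. Set $q=p_2-p_1$. Using $(\cdot)^{[r+s]}=((\cdot)^{[r]})^{[s]}$ we write $S_1=T_1^{[p_1]}$ and $S_2=T_2^{[p_2]}=U_2^{[p_1]}$, where $U_2:=T_2^{[q]}$ is a $q$-thick line with stem $T_2$. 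Since $T_1\cap T_2\neq\emptyset$ and $T_2\subseteq U_2$, the subtrees $T_1$ and $U_2$ have non-empty intersection, so Lemma \ref{lfund} and its corollary apply.

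The core step is to identify $T_1\cap U_2=T_1\cap T_2^{[q]}$. Let $\sigma=T_1\cap T_2$; by the configuration depicted in Figure 4 this is a segment of length $e$, with endpoints $v$ and $v'$, off which the two remaining subsegments of $T_1$ (of lengths $a$ at $v$ and $b$ at $v'$, say) branch in directions that do not run along $T_2$ — this last point is forced by the fact that $T_1\cap T_2$ has length exactly $e$. Because $G$ is a tree, for a vertex $w$ on the $a$-arm of $T_1$ at distance $s$ from $\sigma$ the unique closest vertex of $T_2$ is $v$, so $\delta(w,T_2)=s$; likewise on the $b$-arm. Hence $T_1\cap T_2^{[q]}$ is the path obtained from $\sigma$ by extending along these two arms by $\min\{a,q\}$ and $\min\{b,q\}$, i.e. a path (a $0$-thick line) of stem length $e+\min\{a,q\}+\min\{b,q\}$; in particular it contains $\sigma$ and is non-empty. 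The same argument applies verbatim to the degenerate case $e=0$ (where $v=v'$) and, with the stated conventions $\min(\alpha,\infty/2)=\alpha$ and $\alpha+\infty/2=\infty/2$, to the cases where an arm is infinite.

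To finish, apply the corollary to Lemma \ref{lfund} with the pair $(T_1,U_2)$ and $t=p_1$: since $T_1\cap U_2$ is a $0$-thick line and $T_1^{[p_1]}\cap U_2^{[p_1]}=S_1\cap S_2$, the intersection $S_3=S_1\cap S_2$ is a $(0+p_1)$-thick line with the same stem as $T_1\cap T_2^{[q]}$. Therefore $p_3=p_1=\min\{p_1,p_2\}$ and $l_3=e+\min\{a,p_2-p_1\}+\min\{b,p_2-p_1\}$, which is the asserted formula. (When $p_1=0$ the corollary is not needed, since then $S_3=T_1\cap T_2^{[p_2]}$ is read off directly from the previous paragraph.) I expect the only genuinely delicate point to be the equality $\delta(w,T_2)=s$ in the middle paragraph: it rests on the uniqueness of geodesics in a tree together with the observation that an arm of $T_1$ cannot share an edge with $T_2$, for otherwise the common segment would be longer than $e$. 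Everything else is bookkeeping, including the symmetric case and the conventions for infinite stems.
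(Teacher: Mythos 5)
Your proposal is correct and follows essentially the same route as the paper: the paper's (much terser) proof also reduces to the case $p_1=0$ by peeling off $\min\{p_1,p_2\}$ layers via the corollary to Lemma \ref{lfund} and then reads off the intersection of the line $T_1$ with the remaining $(p_2-p_1)$-thickened stem. You have merely made explicit the base-case computation $T_1\cap T_2^{[q]}$ and the degenerate cases, which the paper leaves to the reader.
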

\begin{figure}
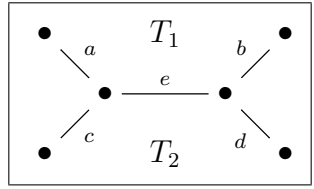

\[ \fbox{ \xygraph{
!{<0cm,0cm>;<.8cm,0cm>:<0cm,.8cm>::} !{(1,0) }*+{\bullet}="a" !{(2,1) }*+{T_1}="a1"
!{(3,0)}*+{\bullet}="b"  !{(2,-1) }*+{T_2}="b1" !{(0,1) }*+{\bullet}="c"
!{(0,-1) }*+{\bullet}="d" !{(4,1)}*+{\bullet}="e"
!{(4,-1)}*+{\bullet}="f" "a"-^e"b" "a"-_a"c" "a"-^c"d" "b"-^b"e"
"b"-_d"f" } }
\]
\caption{Disposition of the stems in Proposition \ref{inter2}.} 
\end{figure}

\begin{proof}
Assume $p_1\leq p_2$. Reasoning as before, we can assume that $p_1=0$, so $S_1=T_1$ is a line. The result follows.
The other case is analogous.
\end{proof}

In all that follows, for any metric space $X$ and any subset $A$, we define a relative depth function $p(A,-):A\rightarrow\mathbb{R}$ by
$$p(A,x)=\sup\left\{r\in[0,\infty)\Big|B(x;r)\subseteq A\right\}.$$
 It is apparent that for any two sets $A_1,A_2\subseteq X$
we have $p(A_1\cap A_2,x)=\mathrm{min}\{p(A_1,x),p(A_2,x)\}$, for any point $x\in A_1\cap A_2$. This concept is useful to us since for any order $\Ha$
with a finite branch, we have $$p(\Ha)=\max_{x\in S_0(\Ha)}p\big(S_0(\Ha),x\big),$$ while this maximum is attained precisely at the points in the stem. 

Recall from \cite{eichler2} that the points in the branch of an order can be classified into stem and leaf points according to the depth of each vertex and its neighbors, as shown in Fig. 5. Note that, from a leaf point there is always a unique direction leading to vertices of larger depth, whence every path without backtracking  in a different direction is a path going outwards through the leaves, a path that cannot be longer that the depth of the starting point. In an infinite leaf $S$, which has no stem points, every path without backtracking has at most one vertex at maximal depth, and  the path on every side from such vertex goes outwards through the leaves. Such paths can only intersect the infinite leaf on a finite path. On the other hand, starting from every vertex of the infinite leaf, there is a unique path without backtracking leading always in the direction of higher depth. Such paths, which are infinite on one direction, are called long paths of the leaf. The long paths corresponding to two vertices $\Da$ and $\Da'$ always coincide from some point $\Da''$ onwards, and the segments determined by $\Da''$ on each path, as in Fig. 6A,
satisfy $p(S,\Da'')=a+p(S,\Da)=b+p(S,\Da')$, so in particular, if $\Da$ and $\Da'$ are endpoints of $S$, i.e. their relative depths are zero, we have $a=b$.

\begin{figure}
 \scriptsize
\[ 
\includegraphics[width=1\textwidth]{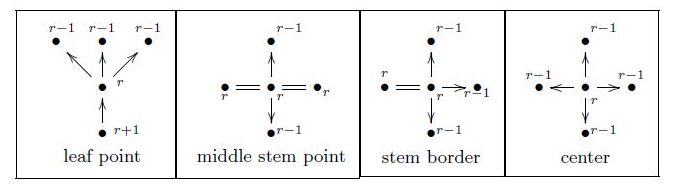}
\]
\normalsize 
\caption{Stem or leaf points. Arrows indicate "outwards" directions. Double lines denote stem edges.}
\end{figure}
\begin{figure}
\[ \includegraphics[width=1\textwidth]{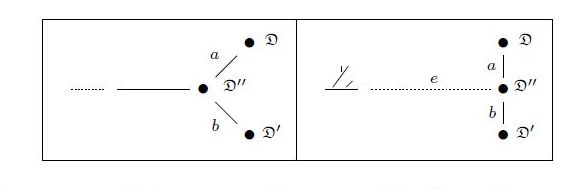}
\]
\caption{Two long paths in an infinite leaf  (A), and a stem not intersecting an infinite leaf (B).}
\end{figure}

\begin{proposition}\label{inter3}
Let $S_1$ be a thick line and let $S_2$ be an infinite leaf. Let $T_1$ be the stem of $S_1$. 
Let $p_3$ and $l_3$ be the invariants of $S_3=S_1\cap S_2$,
 while $p_1$ is the depth of $S_1$. Then,
exactly one of the following conditions hold:
\begin{enumerate}
\item $T_1$ has a vertex $\Da''$ at maximal depth $p_0>p_1$ in $S_2$, which divides $T_1$ in two parts of lengths
$a$ and $b$. In this case $p_3=p_1$ and $l_3=\mathrm{min}\{a,p_0-p_1\}+\mathrm{min}\{b,p_0-p_1\}$.
\item $T_1$ has a vertex $\Da''$ at maximal depth $p_0$, with $0\leq p_0\leq p_1$. In this case $p_3=[\frac{p_1+p_0}2]$, and
$l_3=2\{\frac{p_1+p_0}2\}\in\{0,1\}$.
\item $T_1$ contains a long path of $S_2$. In this case $l_3=\infty/2$, and $p_3=p_1$.
\item $T_1$ lies at a positive distance $e\leq p_1$ from the infinite leaf $S_2$ (Fig. 6B).
 In this case,  $p_3=[\frac{p_1-e}2]$, and
$l_3=2\{\frac{p_1-e}2\}\in\{0,1\}$.
'\end{enumerate}
\end{proposition}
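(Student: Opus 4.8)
The plan is to first pin down the geometry of the infinite leaf $S_2$, and then to reduce each of the four cases to Lemma \ref{lfund}, Lemma \ref{lfund2} and the corollary to Lemma \ref{lfund}, in the same spirit as the proof of Proposition \ref{inter1}. For the first step I would use the properties of $S_2$ recalled before the statement — every long path is an infinite ray, all long paths converge to one common end $\omega$ of the tree, and $p(S,\Da'')=a+p(S,\Da)=b+p(S,\Da')$ for the point $\Da''$ where two long paths merge — to check that $h+p(S_2,\cdot)$ is constant on $S_2$, where $h$ is the Busemann function of $\omega$; normalising $h$ so this constant is $0$ gives $p(S_2,v)=-h(v)$ for all $v\in S_2$, hence $S_2\subseteq\{v:h(v)\le0\}$. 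Conversely, if some edge $uv$ had $u\in S_2$, $v\notin S_2$ and $h(v)\le0$, then $h(v)=h(u)\pm1$: if $h(v)=h(u)-1$ then $v$ is the first step of the long path of $u$, so $v\in S_2$; if $h(v)=h(u)+1$ then $p(S_2,u)=-h(u)\ge1$, so $v\in B[u;1]\subseteq S_2$ — either way a contradiction, and since $\{h\le0\}$ is connected this forces $S_2=\{v:h(v)\le0\}$. From this I extract the one structural fact used below: for each integer $t\ge0$ the set $S_2^{(t)}:=\{v:h(v)\le -t\}$ is a connected subtree (again a horoball at $\omega$) and $(S_2^{(t)})^{[t]}=S_2$; in particular $S_2=U_2^{[p_1]}$ with $U_2:=S_2^{(p_1)}$, while $S_1=T_1^{[p_1]}$.

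Next I would organise the four configurations. If $T_1$ contains a ray pointing at $\omega$ we are in case (3): the $\omega$-ward tail of such a ray lies in $S_2=\{h\le0\}$ and is a long path of $S_2$. Otherwise $\omega$ is not an end of the path $T_1$, so $h\to+\infty$ along $T_1$ in every direction and $T_1$ has a unique point $\Da''$ realising $\min_{T_1}h$; this is the $\Da''$ of the statement, cutting $T_1$ into the pieces of lengths $a,b$. Since for every $y\in T_1$ the ray $[y,\omega)$ passes through $\Da''$, the function $h$ is strictly increasing along $T_1$ away from $\Da''$, so $T_1\cap S_2^{(t)}$ is always a subpath of $T_1$, of length $\min\{a,p_0-t\}+\min\{b,p_0-t\}$ for $0\le t\le p_0$ and empty for $t>p_0$, where $p_0=-h(\Da'')$ when $T_1\cap S_2\ne\emptyset$ and $e=h(\Da'')=\delta(T_1,S_2)$ when $T_1\cap S_2=\emptyset$. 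Thus $T_1\cap S_2\ne\emptyset$ splits into case (1) ($p_0>p_1$) and case (2) ($0\le p_0\le p_1$), while $T_1\cap S_2=\emptyset$ is case (4) with $e>0$ (and $S_3=\emptyset$ if $e>p_1$, this subcase being tacitly excluded from the list).

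In cases (1) and (3) one has $T_1\cap U_2\ne\emptyset$: a ray in case (3), and nonempty in case (1) since $h(\Da'')=-p_0<-p_1$. Hence by Lemma \ref{lfund}, $S_3=S_1\cap S_2=T_1^{[p_1]}\cap U_2^{[p_1]}=(T_1\cap U_2)^{[p_1]}$, a $p_1$-thick line with stem $T_1\cap U_2$; applying the length formula above with $t=p_1$ yields the asserted invariants, with $l_3=\infty/2$ in case (3) because the stem is a ray. In cases (2) and (4) one has $T_1\cap U_2=\emptyset$ (barring the degenerate subcase $p_0=p_1$ of (2), handled like the previous cases and consistent with the formula). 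Put $m=\delta(T_1,U_2)$, which by the first step equals $p_1-p_0$ in case (2) and $p_1+e$ in case (4), so that $0<m\le 2p_1$ under the stated hypotheses; set $t_0=\lceil m/2\rceil\le p_1$. Then $T_1^{[t_0-1]}\cap U_2^{[t_0-1]}=\emptyset$ but $T_1^{[t_0]}\cap U_2^{[t_0]}\ne\emptyset$, so Lemma \ref{lfund2} shows the latter set is a point or an edge, and the corollary to Lemma \ref{lfund} then shows that $S_3=T_1^{[p_1]}\cap U_2^{[p_1]}$ is a $(p_1-t_0)$-thick line with that point or edge as stem, so $p_3=p_1-t_0$ and $l_3\in\{0,1\}$, the value $1$ occurring exactly when $m$ is odd. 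Substituting the two values of $m$ and rewriting $p_1-\lceil m/2\rceil$ and the parity of $m$ via $[\,\cdot\,]$ and $\{\,\cdot\,\}$ gives the stated $p_3,l_3$ in cases (2) and (4); alternatively one may compute the diameter $d_3$ directly (it equals $p_0+p_1$, resp. $p_1-e$) and invoke $d_3=l_3+2p_3$, exactly as in the first case of the proof of Proposition \ref{inter1}.

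The only genuinely delicate point is the first step — identifying the infinite leaf with a horoball and establishing the shrinking identity $(S_2^{(t)})^{[t]}=S_2$ (with $S_2^{(t)}$ connected) purely from the long-path properties recalled before the statement. After that the argument is routine bookkeeping with the two lemmas, mirroring Proposition \ref{inter1}; a minor additional chore is to check that the four listed cases are exhaustive and mutually exclusive, noting the tacit exclusion of the configuration $e>p_1$ for which $S_3=\emptyset$.
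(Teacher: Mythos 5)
Your proof is correct and follows essentially the same route as the paper's: the paper likewise writes $S_2=U_t^{[t]}$ for a contracted infinite leaf $U_t$ with $p(U_{p_1},\Da'')=p(S_2,\Da'')-p_1$, then applies Lemma \ref{lfund} in cases (1) and (3) and Lemma \ref{lfund2} together with a diameter computation (as in Proposition \ref{inter1}) in cases (2) and (4). The only difference is that you justify the contraction identity $(S_2^{(t)})^{[t]}=S_2$ explicitly via the horoball/Busemann-function description of an infinite leaf, a fact the paper asserts without proof.
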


\begin{proof}
Note that, if $e=\delta(T_1,S_2)>p_1$, then $S_1\cap S_2=\emptyset$.
It follows from the paragraph preceding the proposition that (1)-(4) are actually all possible cases.
Assume the hypotheses in case 1. If $p_1=0$, so that $S_1=T_1$ is a path, then so is $S_3$, 
and the result follows from a quick look at Figure 6A. More generally, note that $S_2=U_t^{[t]}$ 
for an  arbitrary positive integer $t$ and a suitable infinite leaf $U_t$. Since $p(U_{p_1},\Da'')=p(S_2,\Da'')-p_1$,
the result follows in this case.

Assume the hypotheses in case 3. Setting $S_2=U_{p_1}^{[p_1]}$ as before, we are reduced to the case $p_1=0$,
which is trivial.
 
In case 2, we observe that $U_{p_1}$ does not intersect $T_1$, whence there is a positive integer
$t\leq p_1$  such that $U^{[t]}_{p_1}$ intersects $T^{[t]}_1$, but $U^{[t-1]}_{p_1}$ does not intersect $T^{[t-1]}_1$.
We conclude that $l_3\in\{0,1\}$. Now the result follows by computing the diameter of $S_3$ as in Proposition
\ref{inter1}. Case 4 is similar.

\end{proof}

\begin{example}
If $\Ha$ is an order generated by an suborder $\Ha_0\cong\oink\times\oink$ and a nilpotent element $\epsilon$,
the branch $S=S_0(\Ha)$ is a path, and $l(S)\neq\infty$, since $S_0(\epsilon)$ does not contain a maximal path.
 It follows that there exist one or two maximal orders $\Da$,
containing $\Ha$, for which $\epsilon\notin\pi\Da$, the endpoints of $S$.
 There exists exactly one such order if and only if $l(S)=\infty/2$, i.e., $S_0(\Ha_0)$ contains
a long path of the infinite leaf $S_0(\epsilon)$. This holds if and only if $\Ha_0\big(\mathrm{Ker}(\epsilon)\big)=\mathrm{Ker}(\epsilon)$. Up to scalar multiples, there are exactly two nilpotent elements with this property.
\end{example}

\begin{proposition}\label{inter4}
Let $S_1$ and $S_2$ be two infinite leaves, and let $S_3=S_1\cap S_2$. Then $S_3$ is infinite if and only if
$S_1$ and $S_2$ have a comon long path. If this holds, then either $S_1\subseteq S_2$ or $S_2\subseteq S_1$, and in this case $S_3$ is an infinite leaf. If $S_3$ is finite, and the maximal depth $p(S_1,\Da)$ of a vertex $\Da$ of $S_3$ is $r$, then $p_3=[\frac r2]$, and $l_3=2\{\frac r2\}\in\{0,1\}$.
\end{proposition}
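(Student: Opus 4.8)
The plan is to read everything off the description of an infinite leaf $S$ recalled just before the statement. Three facts do the work: (i) all long paths of $S$ converge to one and the same end $\omega=\omega(S)$ of the tree, and from each vertex $v\in S$ the long path issuing from $v$ is exactly the geodesic ray from $v$ to $\omega$; (ii) along any long path the relative depth $p(S,\cdot)$ increases by $1$ at each step (the displayed identity $p(S,\Da'')=a+p(S,\Da)$ applied to two adjacent vertices of a long path); (iii) consequently, fixing one long path $\gamma=(w_0,w_1,\dots)$, a vertex $v$ belongs to $S$ if and only if $\delta(v,w_N)\le p(S,w_N)$, where $w_N$ is the projection of $v$ onto $\gamma$, and then $p(S,v)=p(S,w_N)-\delta(v,w_N)$. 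Moreover, by the paragraph preceding the proposition, every infinite non-backtracking path inside $S$ eventually coincides with a long path (an ``outward'' path meets $S$ only on a finite subpath), hence converges to $\omega$; in particular $S$ has the single end $\omega$.

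First I would prove the dichotomy. Observe that $S_1$ and $S_2$ share a long path if and only if $\omega(S_1)=\omega(S_2)$: a common long path converges to $\omega(S_1)$ and to $\omega(S_2)$ simultaneously, and if $\omega(S_1)=\omega(S_2)=\omega$ then the tail of any ray to $\omega$ lies in both leaves (each contains the tails of its own long paths) and is a long path of each. If such a path exists it lies in $S_3$, so $S_3$ is infinite; conversely, if $S_3$ is infinite it is an infinite subtree of a locally finite tree, hence contains a ray $\rho$, and $\rho\subseteq S_1$ forces $\rho$ to converge to $\omega(S_1)$ by (i)--(iii), likewise to $\omega(S_2)$, so $\omega(S_1)=\omega(S_2)$, and then the ray to this common end from any vertex of $S_3$ is a common long path. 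When $\omega(S_1)=\omega(S_2)$, fix a common long path $\gamma=(w_n)$ and set $\alpha_i=p(S_i,w_0)$, so $p(S_i,w_n)=\alpha_i+n$; by (iii), for a vertex $v$ projecting onto $w_N$ we have $v\in S_i\iff\delta(v,w_N)\le\alpha_i+N$, hence $S_1\subseteq S_2$ when $\alpha_1\le\alpha_2$ (and vice versa). Thus $S_3$ equals the smaller leaf, which is an infinite leaf.

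Now the finite case, where $\omega(S_1)\ne\omega(S_2)$. Let $A=(\dots,a_{-1},a_0,a_1,\dots)$ be the bi-infinite geodesic between these two ends, indexed so that $a_n\to\omega(S_1)$ as $n\to+\infty$. The $\omega(S_i)$-tail of $A$ is a tail of a long path of $S_i$, hence lies in $S_i$; walking back along $A$, the depth $p(S_i,\cdot)$ drops by $1$ per step until one leaves $S_i$, and it cannot stay in $S_i$ forever (that would make $A\subseteq S_i$, giving $S_i$ two ends). So $A\cap S_1=\{a_n:n\ge n_1\}$ and $A\cap S_2=\{a_n:n\le n_2\}$ for integers $n_1,n_2$, with $p(S_1,a_n)=n-n_1$ and $p(S_2,a_n)=n_2-n$ on the respective half-lines. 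If $S_3\ne\emptyset$, the projection onto $A$ of any vertex of $S_3$ again lies in $S_3$ (its relative depth in each $S_i$ is at least its distance to $A$, hence positive), so $n_1\le n_2$; put $r=n_2-n_1\ge 0$, so $A\cap S_3$ is a path of length $r$. By (iii), $p(S_1,v)=(n-n_1)-\delta(v,A)\le n-n_1\le r$ for every $v\in S_3$ projecting onto $a_n$, with equality at $v=a_{n_2}$; hence $r=\max_{\Da\in S_3}p(S_1,\Da)$, as in the statement, and symmetrically $r=\max_{\Da\in S_3}p(S_2,\Da)$.

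Finally I would compute $S_3$. For $v\in S_3$ projecting onto $a_n$ at distance $m=\delta(v,A)$ one has $p(S_3,v)=\min\{p(S_1,v),p(S_2,v)\}=\min\{n-n_1,\,n_2-n\}-m$, which is maximized exactly when $m=0$ and $n$ minimizes $|2n-n_1-n_2|$, that is, at the single vertex $a_{n_1+r/2}$ when $r$ is even and at the adjacent pair $a_{n_1+(r-1)/2},a_{n_1+(r+1)/2}$ when $r$ is odd; the maximal value is $[r/2]$. A short check with (iii) shows that $S_3$ is the $[r/2]$-thickening of this stem, hence a thick line, so $p_3=[r/2]$ and $l_3=2\{r/2\}\in\{0,1\}$, consistently with $d(S_3)=l_3+2p_3=r$. (Alternatively, $l_3\in\{0,1\}$ follows from the deflation device of Propositions~\ref{inter1} and \ref{inter3}: write each $S_i$ as a thickening of a smaller infinite leaf, take the thickening radius minimal for which these smaller leaves become disjoint, and invoke Lemma~\ref{lfund2} together with the corollary to Lemma~\ref{lfund}.) The main obstacle is establishing (iii), the ``star-shaped'' description of an infinite leaf by projection onto a long path together with the relative depth there, and then, in the finite case, the coordinate bookkeeping along the axis $A$ — in particular ruling out $A\subseteq S_i$ via one-endedness; once (iii) is in hand, everything else is routine.
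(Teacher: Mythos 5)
Your argument is correct, and for the first two assertions it coincides with the paper's: both rest on the description of an infinite leaf as $\bigcup_{i\ge 0} B(\Da_i;p(S,\Da)+i)$ along a long path, from which one gets that an infinite intersection forces a common long path (equivalently, a common end) and that a common long path forces one leaf to contain the other. Where you genuinely diverge is the finite case. The paper picks a vertex $\Da\in S_3$ of maximal depth $r=p(S_1,\Da)$, observes that $S_3=S_2\cap B(\Da;r)$ and that $\Da$ is an endpoint of $S_2$ (otherwise the $S_1$-long path from $\Da$ would take one more step inside $S_2$, contradicting maximality), and then simply invokes case (2) of Proposition \ref{inter3} with $a=b=p_0=0$ and $p_1=r$, viewing $B(\Da;r)$ as a thick line with a one-point stem. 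You instead coordinatize along the bi-infinite geodesic $A$ between the two distinct ends, show that $A\cap S_1$ and $A\cap S_2$ are opposite half-lines overlapping in a segment of length $r$, and compute $p(S_3,v)=\min\{N-n_1,\,n_2-N\}-\delta(v,A)$ directly. Your route is longer but self-contained and has the added benefit of locating the stem explicitly at the midpoint(s) of $A\cap S_3$; the paper's is shorter because it reuses the earlier proposition. The one step worth spelling out in your version is that the projection onto $A$ of a vertex $v\in S_3$ again lies in $S_3$: the clean justification is that the $S_i$-long path issuing from $v$ is the geodesic from $v$ to $\omega(S_i)$ and hence passes through $\pi_A(v)$ while staying inside $S_i$ — the same observation that underlies the paper's identity $S_3=S_2\cap B(\Da;r)$.
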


\begin{proof}
By inspecting the explicit list of possible branches of orders given in \S1, we conclude that every infinite branch contains an infinite path, and this can only be a long path in each of the branches $S_1$ and $S_2$. First statement follows. Assume now that $S_3$ is infinite and let $\Da$ be an arbitrary vertex of $S_3$. Assume $p(S_1,\Da)\geq p(S_2,\Da)$. Then if $U$ is the long path starting at $\Da$, and $\Da_i$ is the $i$-th vertex of that path we have
$$S_1=\bigcup_{i=0}^\infty B\Big(\Da_i;p\big(S_1,\Da\big)+i\Big)\supseteq \bigcup_{i=0}^\infty B\Big(\Da_i;p\big(S_2,\Da\big)+i\Big)=S_2.$$
Finally, assume $S_3$ is finite. Take a vertex $\Da\in S_3$ whose depth $r=p(S_1,\Da)$ is maximal. Since any path inside $S_3$ starting from $\Da$ goes outwards through the leaves in $S_1$, we have $S_3=S_2\cap B(\Da;r)$. Note also that $\Da$ is an endpoint of $S_2$, whence the result follows by setting $a=b=p_0=0$ and $p_1=r$ in case (2) of Proposition \ref{inter3}.
\end{proof}

\section{Relative position of the branches}

The following lemma, whose proof is straightforward,  is as close as we can get to give a general method to determine the relative position
of the branches:

\begin{lemma}\label{l31}
If the distance between two branches $S_1$ and $S_2$ is $d$, then $S_1^{[t_1]}$ and $S_2^{[t_2]}$ intersect
if and only if $t_1+t_2\geq d$. 
\end{lemma}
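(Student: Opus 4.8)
The plan is to deduce the statement from the distance formula for subtrees together with Lemma~\ref{lfund}. Recall that $S_i^{[t_i]}=\bigcup_{x\in S_i}B[x;t_i]$, so a vertex $z$ lies in $S_1^{[t_1]}\cap S_2^{[t_2]}$ precisely when there are vertices $x_1\in S_1$ and $x_2\in S_2$ with $\delta(z,x_1)\le t_1$ and $\delta(z,x_2)\le t_2$. First I would fix the unique shortest path $\gamma$ realizing $d=\delta(S_1,S_2)$, say from $w_1\in S_1$ to $w_2\in S_2$; by minimality $\gamma$ meets $S_1$ only in $w_1$ and $S_2$ only in $w_2$.

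For the ``if'' direction, assume $t_1+t_2\ge d$. Walk along $\gamma$ a distance $t_1$ from $w_1$ (or stop at $w_2$ if $t_1\ge d$) to reach a vertex $z$. Then $\delta(z,w_1)=\min\{t_1,d\}\le t_1$, so $z\in S_1^{[t_1]}$, and $\delta(z,w_2)=d-\min\{t_1,d\}=\max\{d-t_1,0\}\le t_2$ since $t_1+t_2\ge d$, so $z\in S_2^{[t_2]}$. Hence the intersection is non-empty. (If $d=0$ the two branches already meet and the claim is trivial, using Lemma~\ref{lfund} to see $S_1\cap S_2$ is itself a branch; the case $d=\infty$ does not arise since branches in a fixed tree over a local field are at finite mutual distance once they are disjoint, or else one treats it as vacuous.)

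For the ``only if'' direction, suppose $z\in S_1^{[t_1]}\cap S_2^{[t_2]}$, with $x_i\in S_i$ as above. Since the tree is geodesic, $d=\delta(S_1,S_2)\le \delta(x_1,x_2)\le \delta(x_1,z)+\delta(z,x_2)\le t_1+t_2$. That is all that is needed — no case analysis on the relative positions of the stems is required for this inequality.

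The only genuinely delicate point is making sure ``distance between two branches'' and the convention $d(S)=\infty$ or $\infty/2$ interact correctly with the arithmetic $t_1+t_2\ge d$; but the distance between two \emph{disjoint} subtrees of a tree is always a finite non-negative integer, so $d$ here is a finite number and the inequality is between integers, with both implications as above. I expect the write-up to be only a few lines, exactly as the paper's phrase ``whose proof is straightforward'' suggests; the main thing to get right is the bookkeeping of $\min\{t_1,d\}$ versus $\max\{d-t_1,0\}$ in the ``if'' direction.
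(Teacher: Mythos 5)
Your proof is correct, and since the paper explicitly omits the argument (``whose proof is straightforward''), your two-direction argument --- the triangle inequality for ``only if'' and a vertex on the bridge path at distance $\min\{t_1,d\}$ from $w_1$ for ``if'' --- is exactly the intended straightforward proof. The only point worth stating explicitly in a write-up is that two disjoint non-empty subtrees of a tree are at finite distance, which you already address.
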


If we can write two orders $\Ha_1$ and $\Ha_2$ as the contractions $\Ea_1^{[s_1]}$ and $\Ea_2^{[s_2]}$
of two orders $\Ea_1$ and $\Ea_2$ whose branches are the stems of the branches
$S_1=S_0(\Ha_1)$ and $S_2=S_0(\Ha_2)$, the preceding lemma can be used to find the distance between the
corresponding stems, and thence the invariants for the spanned order $\Ha_3=\oink[\Ha_1,\Ha_2]$. 
If one of the branches is an infinite leaf, the same trick can be applied to find out the maximal depth of the intersection,
as in the following example:

\begin{example}
Let $\epsilon_1$ and $\epsilon_2$ be two nilpotent elements satisfying $\epsilon_1\epsilon_2+\epsilon_2\epsilon_1=\pi^2I$,
where $\pi$ is a uniformizing parameter of $k$ and $I$ is the identity matrix.
Then there is an order containing $\epsilon_1$ and $\frac{\epsilon_2}{\pi^t}$ if and only if $t\leq2$. It follows that
the greatest depth in $S_0(\epsilon_2)$ of an element in $S_0(\epsilon_1)$ is $2$. We conclude that $S=S_0(\epsilon_1,\epsilon_2)$ has the invariants $d(S)=2$, $l(S)=0$, and  $p(S)=1$.
\end{example}

It is 
sometimes better to replace one of the orders by a simpler order with the same branch (or an appropriate sub-branch),
in order to perform computations. The following example illustrate this point. 

\begin{example}
Assume $k=\mathbb{Q}_3$. Let $\Ha'=(\Da_1\cap\Da_2)^{[t]}$, where $\Da_1$ is the only 
maximal order containing $\eta=\lbmatrix 01{-1}0$ and $\Da_2$ is the only 
maximal order containing $\eta'=\lbmatrix 0{27}{-1/27}0$. Let $\epsilon=\lbmatrix 0{81}00$, so that
$\epsilon\eta+\eta\epsilon=-81I$ and $\epsilon\eta'+\eta'\epsilon=-3I$.
 Then, as before, $\Da_1$ has depth
1 in $S_0(\epsilon)$, while $\Da_2$ has depth 4. On the other hand, the relation 
$\eta\eta'+\eta'\eta=-\frac{730}{27}I$
shows that the distance between $S_0(\eta)=\{\Da_1\}$ and $S_0(\eta')=\{\Da_2\}$
 is exactly $3$. We conclude that the branch of $\Ha'$
has a stem of length 3 whose deepest point is the endpoint $\Da_2$ and it is at depth $4$. Therefore,
the invariants for the order $\Ha=\oink[\Ha',\epsilon]$ generated by $\Ha'$ and $\epsilon$ are
(c.f. Prop. \ref{inter3}):
\begin{itemize}
\item $p(\Ha)=t$, and $l(\Ha)=\mathrm{min}\{3,4-t\}$ if $t\leq4$,
\item $p(\Ha)=2+[\frac t2]$, and $l(\Ha)=2\{\frac t2\}$ otherwise.
\end{itemize}

\end{example}

Now we compute the distance between branches of standard generators. Before we do that, we prove a formula for
the branch of a pure quaternion. This is computed in terms of the quadratic defect of its square \cite{ohm}.
 Recall that the quadratic defect of
an element $a\in k$ is the fractional ideal $\mathfrak{d}(a)$ generated by all elements $\eta\in k$ satisfying $|a-u^2|_k\geq|\eta|_k$ for all $u\in k$.  There is always an element $u$ satisfying $(a-u^2)=\mathfrak{d}(a)$. Furthermore, for every uniformizing parameter $\pi$ we have $\mathfrak{d}(\pi)=(\pi)$, while
the unique  unramified quadratic extension of $k$ has the form $k(\sqrt\Delta)$ for any unit $\Delta$
satisfying $\mathfrak{d}(\Delta)=(4)$. The latter is called a unit of minimal quadratic defect.
 The quadratic defect of any other unit $u$ has the form $\mathfrak{d}(u)=(\pi)^{2t+1}$, where $0\leq t<e=v_k(2)$. 

\begin{lemma}
The depth $p=p(S)$ of the branch $S=S_0(q)$, for any pure quaternion $q$ such that $q^2=a\in \oink$, satisfies
$\mathfrak{d}(a)=(\pi)^{2p+1}$,  except in the following cases:
\begin{enumerate}
\item If $q$ generates an unramified quadratic extension, then  $\mathfrak{d}(a)=(\pi)^{2p}$.
\item If $a=b^2$, for some $b\in \oink$, then  $p=v_k(2b)$.
\end{enumerate}
\end{lemma}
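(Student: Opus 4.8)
The plan is to read off $p$ from the identity $\oink[q]=\oink_L^{[p]}=\oink+\pi^p\oink_L$ recalled in \S1, where $L=k(q)$, by making the maximal order $\oink_L$ explicit. Since $q$ is a pure quaternion with $q^2=a$, the algebra $L$ is quadratic over $k$ and $\oink[q]=\oink\cdot1\oplus\oink\cdot q$ is a free $\oink$-module of rank two; once an $\oink$-basis $\{1,\theta\}$ of $\oink_L$ is fixed and one writes $q=c_0+c_1\theta$ with $c_0,c_1\in\oink$ (as $q$ is integral), then $\oink+\oink q=\oink+c_1\oink\theta$, so comparing with $\oink+\pi^p\oink\theta$ gives $p=v_k(c_1)$. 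Equivalently, the reduced trace of $q$ vanishes, so $\operatorname{disc}\bigl(\oink[q]\bigr)=(4a)$ while $\operatorname{disc}\bigl(\oink_L^{[p]}\bigr)=(\pi)^{2p}\operatorname{disc}(\oink_L)$, whence $(4a)=(\pi)^{2p}\operatorname{disc}(\oink_L)$; this is the same statement once $\operatorname{disc}(\oink_L)$ is known. In either form the computation reduces to identifying $\oink_L$, which I would do in four mutually exclusive and exhaustive cases; throughout one assumes $a\neq0$, the case $a=0$ being the nilpotent one where $S_0(q)$ is not a thick line.

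If $a=b^2$ with $b\in\oink\setminus\{0\}$, then $q\neq\pm b$ (a pure quaternion that is a scalar is zero), so $L\cong k\times k$ with $q=(b,-b)$ and $\oink_L=\oink\times\oink$; since $(b,-b)=b(1,1)-2b(0,1)$ and $\{(1,1),(0,1)\}$ is an $\oink$-basis, one gets $p=v_k(2b)$, which is case~(2). If $L$ is the unramified quadratic extension, then $v_k(a)$ is even and $L=k(\sqrt\Delta)$ for a unit $\Delta$ of minimal quadratic defect, normalized so that $\Delta-1$ generates $\mathfrak{d}(\Delta)=(4)$; then $\oink_L=\oink+\oink\theta$ with $2\theta-1=\sqrt\Delta$, and writing $a=\Delta t^2$ with $v_k(t)=v_k(a)/2$ one finds $q=\sqrt a=2t\,\theta-t$, so $p=v_k(2t)$. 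Hence $\mathfrak{d}(a)=(t^2)\mathfrak{d}(\Delta)=(t^2)(4)=(\pi)^{2v_k(t)+2e}=(\pi)^{2p}$, which is case~(1).

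It then remains to treat $L=k(\sqrt a)$ a ramified field, where the claim is $\mathfrak{d}(a)=(\pi)^{2p+1}$. If $v_k(a)$ is odd, write $a=\pi^{2n}\alpha$ with $v_k(\alpha)=1$; then $\sqrt\alpha$ is a uniformizer of $L$, $\oink_L=\oink+\oink\sqrt\alpha$, and $q=\pi^n\sqrt\alpha$ gives $p=n$, while $\mathfrak{d}(a)=(\pi^{2n})\mathfrak{d}(\alpha)=(\pi^{2n})(\pi)=(\pi)^{2p+1}$. If $v_k(a)$ is even, write $a=\pi^{2n}a_0$ with $a_0$ a non-square unit and $\mathfrak{d}(a_0)=(\pi)^{2s+1}$, where $0\le s<e=v_k(2)$ by the structure of quadratic defects of units recalled above. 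Choose a unit $u_0\in\oink$ with $(a_0-u_0^2)=\mathfrak{d}(a_0)$ (it must be a unit, else $a_0-u_0^2$ would be a unit) and set $\xi=\pi^{-s}(\sqrt{a_0}-u_0)$. Then $\xi$ is a root of $X^2+2u_0\pi^{-s}X+\pi^{-2s}(u_0^2-a_0)$, whose linear coefficient has valuation $e-s\ge1$ and whose constant term has valuation $1$; this polynomial is Eisenstein, so $\oink_L=\oink+\oink\xi$. From $\sqrt{a_0}=u_0+\pi^s\xi$ one gets $q=\sqrt a=\pi^n u_0+\pi^{n+s}\xi$, hence $p=n+s$, and $\mathfrak{d}(a)=(\pi^{2n})\mathfrak{d}(a_0)=(\pi)^{2n+2s+1}=(\pi)^{2p+1}$, as claimed.

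The step I expect to be the main obstacle is the last one: exhibiting the maximal order of a wildly ramified quadratic extension generated by a unit. The device of using the Eisenstein polynomial of $\pi^{-s}(\sqrt{a_0}-u_0)$ works, but its integrality rests squarely on the inequality $s<e$, i.e.\ on the precise form $(\pi)^{2s+1}$ for the quadratic defect of a non-square unit, which is exactly why that hypothesis appears in \cite{ohm}. The remaining ingredients — reducing to an $\oink$-basis of $\oink_L$, and the elementary identity $\mathfrak{d}(\pi^{2n}a_0)=(\pi^{2n})\mathfrak{d}(a_0)$ — are routine, as is the verification that the four cases are exhaustive and that the two exceptions in the statement are precisely the square case and the unramified case.
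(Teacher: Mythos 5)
Your proof is correct, and its central construction is the same as the paper's: in the ramified case both arguments hinge on the element $(q-u_0)/\pi^s$, where $u_0$ realizes the quadratic defect of $a$, and on the companion computations for $k\times k$ and the unramified extension. The one genuine difference is how the exact value of $p$ is pinned down. The paper argues by two inequalities: $p\geq s$ because $(q-u_0)/\pi^s$ is integral, and $p\leq s$ by invoking the cited fact that depth $p$ forces some $(q-b)/\pi^p$ to be integral, whose norm condition contradicts the minimality in the definition of $\mathfrak{d}(a)$. You instead verify outright that your element $\xi$ generates the full maximal order $\oink_L$ (Eisenstein polynomial in the ramified case, unit discriminant in the unramified one, $\oink\times\oink$ in the split one) and then read $p$ off as the conductor exponent from $\oink[q]=\oink+\pi^p\oink_L$, which the paper's \S1 licenses. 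Your route is slightly more self-contained, replacing the citation of \cite[Lemma 2.5]{eichler2} by the standard Eisenstein criterion; the paper's route avoids having to identify $\oink_L$ explicitly. Note only that your Eisenstein step genuinely needs $s<e$, as you say, and that your reduction of the general $a$ to $a=\pi^{2n}a_0$ via $\mathfrak{d}(\pi^{2n}a_0)=(\pi^{2n})\mathfrak{d}(a_0)$ plays the role of the paper's ``without loss of generality'' normalization.
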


\begin{proof}
Without loss of generality, we can assume $a$ is either a unit or a uniformizing parameter.
Assume $\mathfrak{d}(a)=(\pi)^{2t+1}$, so in particular $k(q)/k$ is a ramified extension. Then, there exists a unit
$v\in\oink^*$ and an integer $b\in\oink$ satisfying $a=b^2+v\pi^{2t+1}$. Let $\omega=\frac{q-b}{\pi^t}$. The quaternion $\omega$ is integral over $\oink$, since its norm is $\frac{b^2-q^2}{\pi^{2t}}=-\pi v$ and its trace is
 $\frac{-2b}{\pi^t}$, which is an integer since $t\leq v_k(2)$. 
 It follows  that $\oink[q]\subseteq \oink_{k(q)}^{[t]}$, and therefore $p\geq t$ by \cite[Prop. 2.4 and Prop. 4.2]{eichler2}.
 On the other hand, the branch of $\oink_{k(q)}$ is a path of length 1, and therefore its depth is $0$. 
If the branch of $\oink[q]$ has 
depth $p$, there must exists an element $b\in\oink$ such that $\frac{q-b}{\pi^p}$ is an integer \cite[Lemma 2.5]{eichler2}, and therefore
so is the norm $\frac{b^2-q^2}{\pi^{2p}}$. We conclude that $b^2-a\in (\pi^{2p})$, and therefore $p\leq t$.
The unramified case is similar. 

Assume now that $a=b^2$, so $k(q)$ can be identified with $k\times k$. Then we can assume $q=(b,-b)$, so that $\omega:=(0,1)=\frac1{2b}[(b,b)-q]$ is integral over $\oink$. Now the proof of the inequality $p\geq v_k(2b)$ goes as before. For the converse we observe that if $\frac{q-(b',b')}{\pi^p}$ is integral over $\oink$, then both $b-b'$ and
$-b-b'$ are in the ideal $(\pi^p)$, and therefore so is their difference $2b$.
\end{proof}

\begin{cor}
In the notations of the previous lemma, if $a=q^2$ is a unit, then
the depth satisfies $p\leq v_k(2)$, with equality if and only
if $k(q)$ is isomorphic to $k\times k$ or an unramified extension. If $a$ is a uniformizing parameter,
the invariants of $S_0(q)$ are $l=1$ and $p=0$.
\end{cor}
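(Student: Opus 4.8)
The plan is to read off each assertion directly from the previous lemma, using the facts on quadratic defects recalled just before it.

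For the first claim I would assume $a=q^2$ is a unit and run through the three mutually exclusive alternatives of the lemma. If $a=b^2$ with $b\in\oink$, then $b$ is a unit (as $a$ is), so case (2) of the lemma gives $p=v_k(2b)=v_k(2)$, and here $k(q)\cong k\times k$. If instead $q$ generates the unramified quadratic extension, that extension equals $k(\sqrt\Delta)$ for a unit $\Delta$ of minimal quadratic defect $\mathfrak{d}(\Delta)=(4)$; since $k(\sqrt a)=k(\sqrt\Delta)$ we may write $a=\Delta c^2$ for some unit $c$, and as the quadratic defect scales by squares we get $\mathfrak{d}(a)=c^2\mathfrak{d}(\Delta)=(4)=(\pi)^{2v_k(2)}$, so case (1) of the lemma gives $2p=2v_k(2)$, i.e.\ $p=v_k(2)$. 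In the remaining, generic, case $a$ is a non-square unit whose extension is not the unramified one, hence $\mathfrak{d}(a)\neq(4)$; by the classification of quadratic defects of units we then have $\mathfrak{d}(a)=(\pi)^{2t+1}$ for some $0\le t<v_k(2)$, and matching this against the generic conclusion $\mathfrak{d}(a)=(\pi)^{2p+1}$ of the lemma gives $p=t<v_k(2)$. Assembling the three cases yields $p\le v_k(2)$, with equality exactly in the first two, that is, exactly when $k(q)\cong k\times k$ or $k(q)$ is the unramified quadratic extension.

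For the second claim, suppose $a=q^2$ is a uniformizing parameter. Then $a$ has odd valuation, so it is not a square and $k(q)\not\cong k\times k$; moreover its quadratic defect is $(a)=(\pi)\neq(4)$, so $k(q)$ is a ramified quadratic field extension, not the unramified one. Hence the generic case of the lemma applies and gives $\mathfrak{d}(a)=(\pi)^{2p+1}$, which together with $\mathfrak{d}(a)=(a)=(\pi)$ forces $p=0$. Finally, since $k(q)/k$ is ramified, the first item of the list of branch shapes recalled in \S1 shows that $S_0(q)$ is a thick line with a stem of length $1$, so $l=1$.

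The only step that requires a little care is the scaling identity $\mathfrak{d}(c^2a)=c^2\mathfrak{d}(a)$ for $c\in k^*$, immediate from the definition of the quadratic defect via the substitution $u\mapsto cu$, together with the bookkeeping matching the exceptional case $a=b^2$ of a unit with $k(q)\cong k\times k$ and the unramified exceptional case with quadratic defect $(4)$; there is no genuine obstacle, as the corollary follows directly from the lemma and the recalled facts.
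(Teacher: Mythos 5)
Your derivation is correct and is exactly the intended argument: the paper states this corollary without proof precisely because it follows by running the three alternatives of the lemma against the recalled classification of quadratic defects of units (square, minimal defect $(4)$, or $(\pi)^{2t+1}$ with $0\le t<v_k(2)$), which is what you do. The uniformizer case is likewise handled correctly via $\mathfrak{d}(\pi)=(\pi)$ and item (1) of the list in \S1.
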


Next result give us the relative position of the branches for a cyclic order of the form $\oink[i,j]$ as in the introduction.

\begin{lemma}
Let $\Ha=\oink[i,j]$ be a cyclic order in a split quaternion algebra $\alge=K[i,j]$, for a pair of orthogonal pure quaternions
$i$ and $j$, such that $a=i^2$, and $b=j^2$, are both in $\oink^*\cup \pi\oink^*$.
\begin{enumerate}
\item Assume $a$ and $b$ are units,and let $s$ and $r$ be the depths of $S_0(i)$ and $S_0(j)$ respectively.
\begin{enumerate}
\item If $v_k(2)-s-r<0$, then the distance between the stems of $S_0(i)$ and $S_0(j)$ is $s+r-v_k(2)$.
\item If $v_k(2)-s-r=0$, then the stems of $S_0(i)$ and $S_0(j)$ intersect in a single point.
\item If  $v_k(2)-s-r>0$, the stems coincide.
\end{enumerate}
\item If $a$ is prime,  then the stem of $S_0(i)$ is contained in the stem of $S_0(j)$.
\end{enumerate}
\end{lemma}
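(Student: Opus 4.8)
The plan is to reduce everything to locating the two stems and then to exploit the anticommutation $ij=-ji$ through Lemma~\ref{l31}, exactly in the spirit of the examples of this section. First I would record that $S_0(i)=T_1^{[s]}$ and $S_0(j)=T_2^{[r]}$, where $T_1=S_0(\oink_{k(i)})$ and $T_2=S_0(\oink_{k(j)})$ are the stems (reading $\oink_{k(i)}$ as $\oink\times\oink$ when $k(i)\cong k\times k$). By the preceding corollary, when $a$ is a unit the stem $T_1$ is a single vertex if $k(i)/k$ is unramified, a maximal line if $k(i)\cong k\times k$, and an edge if $k(i)/k$ is ramified, in which case $s<v_k(2)$; likewise for $T_2$. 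So the lemma is a statement about the position of $T_2$ relative to $T_1$.

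For part (1), choose $\oink$-algebra generators $\omega_i,\omega_j$ of $\oink_{k(i)},\oink_{k(j)}$, so that $i=c_i\mathbb I+\pi^s d_i\omega_i$ and $j=c_j\mathbb I+\pi^r d_j\omega_j$ with $d_i,d_j\in\oink^*$ and, because $a$ and $b$ are units, $c_i,c_j\in\oink^*$ as well (their squares are $\equiv a,b$ modulo $\pi$). Since $\oink[\oink_{k(i)}^{[t_1]},\oink_{k(j)}^{[t_2]}]=\oink[\pi^{t_1}\omega_i,\pi^{t_2}\omega_j]$ spans $S_0(\oink_{k(i)})^{[t_1]}\cap S_0(\oink_{k(j)})^{[t_2]}$, Lemma~\ref{l31} gives $\delta(T_1,T_2)=\min\{t_1+t_2 : \oink[\pi^{t_1}\omega_i,\pi^{t_2}\omega_j]\text{ is an order}\}$. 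Expanding $ij+ji=0$ yields
$$\omega_i\omega_j+\omega_j\omega_i=\frac{-2c_ic_j-2c_i\pi^r d_j\,\omega_j-2c_j\pi^s d_i\,\omega_i}{\pi^{s+r}d_id_j},$$
so, with $A=\pi^{t_1}\omega_i$ and $B=\pi^{t_2}\omega_j$, the anticommutator $AB+BA$ is an $\oink$-linear combination of $\mathbb I$, $A$ and $B$ whose coefficients have $\pi$-valuations $v_k(2)+t_1+t_2-s-r$, $v_k(2)+t_1-s$ and $v_k(2)+t_2-r$. The last two are nonnegative since $s,r\le v_k(2)$, and $A^2,B^2\in\oink\mathbb I+\oink A$ by Cayley–Hamilton (the reduced traces and norms of $\omega_i,\omega_j$ are integral); hence $\oink[A,B]=\oink\mathbb I+\oink A+\oink B+\oink AB$ is bounded, thus an order, exactly when $v_k(2)+t_1+t_2-s-r\ge 0$, and is unbounded otherwise, a negative valuation of the $\mathbb I$-coefficient forcing a non-integral scalar into a maximal order. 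Therefore $\delta(T_1,T_2)=\max\{0,s+r-v_k(2)\}$, which settles (1a) and shows that the stems meet in cases (1b) and (1c).

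It remains, in (1b) and (1c), to decide whether the stems coincide or meet in a single point. A short computation with the relative depth function gives $p(S_0(i,j))=\min\{s,r\}$, whence (say $s\le r$) the stem of $S_0(i,j)$ equals $T_1\cap T_2^{[r-s]}$; but this alone does not separate the two cases, so at this point I would put the configuration in standard position. Normalise so that $\oink_{k(i)}$ is spanned by $\mathbb I$ and a companion-type generator $\omega_i=\bbmatrix 01{-\nu_i}{\mu_i}$ with $v_k(\nu_i)=1$ and $v_k(\mu_i)=v_k(2)-s$, so that $T_1$ is the edge joining $\matrici_2(\oink)$ and $\bbmatrix{\oink}{\pi^{-1}\oink}{\pi\oink}{\oink}$; then $i=c_i\mathbb I+\pi^s\omega_i$, and $ij=-ji$ forces $j=\bbmatrix xyz{-x}$ up to a scalar with $z=-2c_ix\pi^{-s}+\nu_i y$. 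Scaling $j$ so that $x^2+yz=b$, I would compute the generator $\omega_j=(j-c_j)\pi^{-r}$ of $\oink_{k(j)}$, locate the edge (or line, or vertex) $T_2$ it spans, and check directly that $T_1=T_2$ precisely when $s+r<v_k(2)$, that $T_1$ and $T_2$ share exactly one vertex when $s+r=v_k(2)$, and — in the sub-case $k(j)\cong k\times k$ of (1b) — that $T_1$ is not contained in the split apartment $T_2$; the non-dyadic instance of (1b), where $v_k(2)=0$ and $s=r=0$, is the degenerate case $T_1=T_2$ a single vertex, dispatched by the same computation. I expect this to be the main obstacle: in the dyadic ramified case $2$ is a non-unit and $\oink_{k(i)}$ has no trace-zero generator, so one is forced to work with the explicit generators above and to separate several ramification types of $k(j)$.

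For part (2), if $a$ is a uniformizer then $p(S_0(i))=0$ and $l(S_0(i))=1$ by the preceding corollary, so $\oink[i]=\oink_{k(i)}$, $i$ is a uniformizer of the ramified extension $k(i)$, and $T_1=S_0(i)$ is an edge whose two endpoints are interchanged by conjugation by $i$. Since $ij=-ji$ gives $iji^{-1}=-j$, the involution $\phi(\Da)=i\Da i^{-1}$ of the Bruhat–Tits tree preserves $\oink[j]$, hence $S_0(j)$, hence its stem $T_2$. As in the second paragraph, $\delta(T_1,T_2)=0$: taking $\omega_i=i$, the $\mathbb I$-coefficient of $AB+BA$ is $0$ when $b$ is prime and has valuation $v_k(2)+t_2-r\ge 0$ when $b$ is a unit (because then $r\le v_k(2)$), so $\oink[\pi^{t_1}i,\pi^{t_2}\omega_j]$ is an order for all $t_1,t_2\ge 0$. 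Now pick $z\in T_1\cap T_2$. If $z$ is not a vertex it lies in the interior of the edge $T_1$, hence in the interior of the path $T_2$ (whose endpoints are vertices), so $T_2$ contains all of $T_1$. If $z$ is a vertex, it is one endpoint of $T_1$; then $\phi(z)$ is the other endpoint and lies in $\phi(T_2)=T_2$, and connectedness of $T_2$ forces the edge between them, i.e.\ $T_1$, into $T_2$. In either case $T_1\subseteq T_2$, as claimed.
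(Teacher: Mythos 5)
Your treatment of case (1a) is essentially the paper's: you expand $ij=-ji$ in terms of the integral generators $\omega_i,\omega_j$ of $\oink_{k(i)},\oink_{k(j)}$, observe that the scalar term $\frac{2c_ic_j}{\pi^{r+s}}$ is dominant, and convert the integrality condition on the anticommutator into the distance between the stems via Lemma~\ref{l31}. Your part (2) is also correct and is a genuinely different (and rather elegant) route: instead of the paper's residual eigenvector computation, you use that conjugation by $i$ is an isometry of odd determinant valuation, hence swaps the two endpoints of the edge $T_1=S_0(i)$ while stabilizing $T_2$; once $T_1\cap T_2\neq\emptyset$, connectedness of $T_2$ forces $T_1\subseteq T_2$. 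This avoids choosing coordinates altogether.

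The genuine gap is in cases (1b) and (1c). Having shown $\delta(T_1,T_2)=0$, you must still decide whether the stems share exactly one vertex ($s+r=v_k(2)$) or coincide ($s+r<v_k(2)$), and here you only announce a plan --- put $\omega_i$ in companion form, solve $ij=-ji$ for $j$, and ``check directly'' --- which you yourself flag as the main obstacle; nothing is actually verified, and it is precisely this dichotomy that carries the content of (1b) versus (1c). The missing idea, which is how the paper settles both cases, is to reduce modulo $\pi$ inside a common maximal order $\Da\cong\matrici_2(\oink)$: the number of maximal orders containing both $\omega_i$ and $\omega_j$ exceeds one exactly when $\bar\omega_i$ and $\bar\omega_j$ admit a common eigenvector in $\matrici_2(\mathbb{K})$, and the reduction of your anticommutation identity reads $\bar\omega_i\bar\omega_j+\bar\omega_j\bar\omega_i=\bar u\bar c_i\bar c_j$ with $\bar u\bar c_i\bar c_j\neq0$ when $s+r=v_k(2)$ (no common eigenvector, after separately handling $rs=0$ and the non-dyadic case), and $=0$ when $s+r<v_k(2)$ (where, in residue characteristic $2$, anticommuting means commuting, so $\bar\omega_j$ preserves the eigenspace of $\bar\omega_i$ coming from ramification, giving a common eigenvector and hence a second shared vertex; since both stems are then edges, they coincide). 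Without this --- or a completed version of your standard-position computation covering all dyadic ramification types --- cases (1b) and (1c) remain unproved. A secondary caution: your assertion that $c_i,c_j\in\oink^*$ ``because $a$ and $b$ are units'' is only valid when $s,r>0$; this is automatic in case (1a) but not in (1b)--(1c), where you should only use the one-sided implication that nonnegative coefficient valuations make $\oink[\omega_i,\omega_j]$ an order.
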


\begin{proof}
Reasoning as in the proof of the previous lemma, there are elements $c,d\in\oink$ and integral elements $\eta\in k(i)$, $\omega\in k(j)$,
 such that $i=d+\pi^s\eta$ and $j=c+\pi^r\omega$. The relation $ij=-ji$ implies
\begin{equation}\label{etaom}
\omega\eta+\eta\omega=\frac2{\pi^{r+s}}cd+\frac2{\pi^r}c\eta+\frac2{\pi^s}d\omega.
\end{equation}
Assume first that $a$ and $b$ are units and the condition in (1a) is satisfied,
so that in particular $r$ and $s$ are  positive and therefore $c$ and $d$ are units. Then the first term on the 
right of equation (\ref{etaom}) is  dominant. In particular, if $\omega$ and $\pi^t\eta$ are in a maximal order,
the element $\pi^t(\omega\eta+\eta\omega)$ is integral, and therefore $t\geq  s+r-v_k(2)$. On the other hand,
if $t\geq  s+r-v_k(2)$, equation (\ref{etaom}) and the fact that $\eta$ and $\omega$ satisfy monic quadratic polynomials over $\oink$, proves that the lattice with basis $\{1,\pi^t\eta,\omega,\pi^t\eta\omega\}$ is a ring.
The result follows in Case (1a).

Assume now that $v_k(2)-s-r=0$. As before, there is a maximal order $\Da$ containing $\eta$ and $\omega$. We can
assume that $\Da=\matrici_2(\oink)$. Let $\bar{\omega}$ and $\bar{\eta}$ be the images of $\omega$ and $\eta$
in $\Da/\pi\Da\cong\matrici_2(\mathbb{K})$, where $\mathbb{K}$ is the residue field of $k$.
 To prove that $\Da$ is the unique order
containing $\eta$ and $\omega$, it suffices to show that $\bar{\omega}$ and $\bar{\eta}$ have no common eigenvector.
\begin{itemize}
\item Assume first that $k$ is dyadic, and $rs\neq0$. Then the equation for $\bar{\omega}$ and $\bar{\eta}$ becomes
$ \bar{\omega}\bar{\eta}+\bar{\eta}\bar{\omega}=\bar{u}\bar{c}\bar{d}$ where the bar denotes projection to the residue field, while $c$, $d$ and $u=\frac2{\pi^{r+s}}$, are all units. If $v\in\mathbb{K}^2$ is a comon eigenvector for $\bar{\omega}$ and $\bar{\eta}$ with corresponding eigenvalues $\lambda_{\omega}$ and $\lambda_{\eta}$, we have
 $0=\lambda_{\omega}\lambda_{\eta}+\lambda_{\eta}\lambda_{\omega}=\bar{u}\bar{c}\bar{d}$, a contradiction.
\item If $k$ is dyadic, but $rs=0$, we can assume $r=0$ and therefore we can also assume $j=\omega$, so $c=0$.
Note that $s\neq0$ and $u=2/\pi^s$ is a unit, since $r+s=v_k(2)$. The equation for $\bar{\omega}$ and $\bar{\eta}$ becomes $ \bar{\omega}\bar{\eta}+\bar{\eta}\bar{\omega}=\bar{u}\bar{d}\bar{\omega}$, 
and   the eigenvalue $\lambda_\omega$ 
is non-zero, as $\omega=j$ is the square root of a unit. Now the result follows as before.
\item If $k$ is non-dyadic, then $s=r=0$ and we can assume as before that  $j=\omega$ and $i=\eta$. The result
follows since in this case $\bar{\omega}$ and $\bar{\eta}$ are generators of a quaternion algebra, i.e., they generate
$\matrici_2(\mathbb{K})$.
\end{itemize} The result follows in Case (1b).

Assume now that $v_k(2)-s-r>0$, so that, in particular, $k$ is dyadic. This case is similar to the preceding one, except that the equation in the residual algebra is $ \bar{\omega}\bar{\eta}+\bar{\eta}\bar{\omega}=0$. The inequality $v_k(2)-s-r>0$ implies that both $i$ and $j$ generate ramified extensions and therefore the stems are paths of length 1. It suffices, as before, to prove that 
$\bar{\omega}$ and $\bar{\eta}$ have a common eigenvector, so $\eta$ and $\omega$ are contained simultaneously in more than one
maximal order. Note that $\bar{\omega}$ has an eigenvector,
since $\omega$ generates a ramified extension, and $\bar{\eta}$ leaves this eigenspace invariant, since it commutes
with $\bar{\omega}$. The result follows in Case (1c).

Finally we prove Case (2). In this case, the quadratic defect of $a$ is $(\pi)$, so we can set $i=\eta$, $s=0$, and $d=0$,
in equation (\ref{etaom}). In particular, $v_K(2)-s-r\geq 0$, so that equation (1) proves that $\eta$ and $\omega$ span
an order. It follows that  there exists a maximal order $\Da$ containing $\eta$ and $\omega$, and we can assume $\Da=\matrici_2(\oink_K)$. Furthermore, we can assume $i=\lbmatrix 0a10$,
and therefore $\bar{\eta}=\lbmatrix 0010$.
In this case the equation for $\bar{\omega}$ and $\bar{\eta}$ becomes
$ \bar{\omega}\bar{\eta}+\bar{\eta}\bar{\omega}=\bar{u}\bar{d}\bar{\omega}$, and by setting $\bar{\omega}=\lbmatrix xyzw$, the preceding equation gives $y=0$, so that $\lcvector01$ is a common eigenvector.
The result follows as before.
\end{proof}

\section{The tables}

\newcommand{\OOO}{\mathcal{O}}
\newcommand{\pe}{\mathfrak{p}}
\newcommand{\de}{\mathfrak{d}}

In this section we present complete tables for the invariants of an order of the form $\Ha=\OOO[\pi^ri,\pi^sj]$ where $i^2=\alpha$ and $j^2=\beta$ are units or primes, while $r$ and $s$ are non-negative integers. All computations are straightforward from Propositions 2.1-2.2 and Propositions 3.2-3.3.

 When $k$ is non-dyadic every unit is unramified.
In this case, the depth of the branches, $p_1=p(\OOO[i])$  and $p_2=p(\OOO[j])$ are zero. 
The invariants of $\Ha$ for all possible values of $\alpha$ and $\beta$ can be read from Table 1, 
by switching $\alpha$ and $\beta$ if needed.  Note that in the table, the case $\alpha\in\Delta\OOO^{*2}$,  
$\beta\in\pi\OOO^{*}$
is not shown. This is due to the
fact that $\left(\frac{\Delta,\pi}k\right)$ is always a division algebra. The intersection of the branches 
are shown in Figure 7.

\begin{table}[!hbp]
\small\[
\begin{tabular}[c]{|c|c|c|c|c|c|}\hline
$\alpha \in$ & $\beta \in $ & $r-s$ & $p(\h)$ & $l(\h)$ & $d(\h)$\\ \hline
{$\OOO^{*2}$} & $\OOO^{*2}$ &&  $\min\{r,s\}$ & $2|r-s|$ &$2\max\{r,s\}$\\ \hline
 {$\OOO^{*2}$}  & $ \Delta\OOO^{*2}$ &$\leq0$&$r$&  $2(s-r)$ &  $2s$  \\ \hline
 {$\OOO^{*2}$}  & $ \Delta\OOO^{*2}$ &$\geq0$&$s$&  $0$ &  $2s$  \\ \hline
 {$\OOO^{*2}$} & {$\pi\OOO^{*}$} &$\leq0$& $r$ & $2(s-r)+1$ & $2s+1$\\ \hline
 {$\OOO^{*2}$} & {$\pi\OOO^{*}$} &$\geq0$& $s$ & $1$ & $2s+1$\\ \hline
 {$\Delta\OOO^{*2}$} & $\Delta\OOO^{*2} $ & &$\min\{r,s\}$&$0$ & $2\min\{r,s\}$  \\ \hline
  {$\pi\OOO^{*}$} & $\pi\OOO^{*}$  && {$\min\{r,s\}$} & $1$ &  {$2\min\{r,s\} + 1$} \\ \hline
\end{tabular}
\]\normalsize
\caption{Invariants of the order $\Ha$ described in the text, when $k$ is non-dyadic.}
\end{table}

\begin{figure}
\[ \includegraphics[width=1\textwidth]{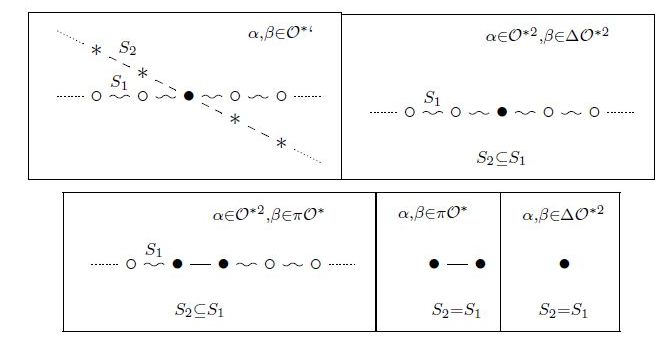}
\]
\caption{All possible configurations of the branches $S_1=S_0(i)$ and
$S_2=S_0(j)$ for a non-dyadic local field. Bullets and continuous lines $(\bullet\!-\!\!\!-\bullet)$
 denote the intersection $S_3=S_1\cap S_2$.}
\end{figure}

In order to simpliy Table 2, which contain the values for the invariants at
 general dyadic places, we define, when both $\alpha$ and $\beta$ are units, an auxiliar invariant $\epsilon$.  
Let  $$g=g(\alpha,\beta;r,s)=\delta(T_1,T_2)-\Big|p\big(S_0(\pi^ri)\big)-
p\big(S_0(\pi^sj)\big)\Big|,$$
where $T_1$ and $T_2$ are the stems of $S_0(\pi^ri)$ and $S_0(\pi^sj)$ respectively. 
Let
 $\mho=\OOO^*\backslash(\OOO^{*2}\cup\Delta\OOO^{*2})$ be the set of ramified units. When $\beta\in\mho$,
we define $t$ by $\mathfrak{d}(\beta)=(\pi)^{2t+1}$. Similarly, When $\alpha\in\mho$,
we set $\mathfrak{d}(\alpha)=(\pi)^{2u+1}$. In case $\delta(T_1,T_2)>0$, which is the only case where $\epsilon$
is needed, the number $g$ is given by
the formula (see Lemmas 3.2-3.3):
$$g=
\left\{\begin{array}{rcl}
t+u-e-|u+r-t-s|&\textnormal{ if }&\alpha,\beta\in\mho\\
t-|e+r-t-s|&\textnormal{ if }&\alpha\in \OOO^{*2}\cup\Delta\OOO^{*2},\beta\in\mho\\
e-|r-s|&\textnormal{ if }&\alpha,\beta\in \OOO^{*2}\cup\Delta\OOO^{*2}\end{array}\right..$$
In all cases we set $\epsilon=2\{g/2\}$, i.e., $\epsilon=0$ if $g$ is even and $1$ otherwise.
 Note that the invariant $t$ is defined as $0$ when $\beta\in\pi\OOO^{*}$. The same observation applies to $u$.

The last two lines in Table 2 deserve some additional explanation. The computation naturally breaks into three
cases acording to whether $t+u-e$ is positive, $0$ or negative (cf. Lemma 3.3). However, in each case the results
 coincide, except for the fact that the interval $[e-2u,2t-e]$ is $\{0\}$ when $t+u-e=0$, and empty when
$t+u-e<0$.

\begin{table}[!hbp]
\scriptsize\[
\begin{tabular}[c]{|c|c|c|c|c|c|}\hline
 $\alpha \in $ & $\beta \in $ &$(r-s)\in$ & $p(\h)$ & $l(\h)$ & $d(\h)$\\ \hline
 $\OOO^{*2}$ & $\OOO^{*2}$ &$[-e,e]^c$
&  $\min\{r,s\}+e$ &$2w$&$2\max\{r,s\}$\\ \hline
 $\OOO^{*2}$ & $\OOO^{*2}$ &$[-e,e]$& $\frac12(e+r+s-\epsilon)$ 
 & $\epsilon$ &  $e+r+s$  \\ \hline
 $\OOO^{*2}$ & $ \Delta\OOO^{*2}$ &$[-\infty,-e]$&$e+r$ & $2w$  & $2s$ \\ \hline
 $\OOO^{*2}$ &  $ \Delta\OOO^{*2}$ &$[-e,e]$&$\frac12(e+r+s-\epsilon)$
 & $\epsilon$  & $e+r+s$ \\ \hline
 $\OOO^{*2}$ &  $ \Delta\OOO^{*2}$ &$[e,\infty]$&$e+s$ & $0$  & $2(e+s)$ \\ \hline
 $\OOO^{*2}$ & {$\pi\OOO^{*}$} &$[-\infty,-e]$& $e+r$ & $2w+1$ &$2s+1$\\ \hline
 $\OOO^{*2}$ &{$\pi\OOO^{*}$} &$[-e,\infty]$& $s$ & $1$ &$2s+1$\\ \hline
 $\OOO^{*2}$ & {$\mho$} &$[-\infty,-e]$&$e+r$ & $2w$  & $2s$ \\ \hline
 $\OOO^{*2}$ & {$\mho$} &$[-e,-e+2t]$& $\frac12(e+r+s-\epsilon)$ & $\epsilon$ &$e+r+s$\\ \hline
 $\OOO^{*2}$ & {$\mho$} &$]-e+2t,\infty]$&$t+s$ & $1$  & $2(t+s)+1$ \\ \hline
 $ \Delta\OOO^{*2}$ & $ \Delta\OOO^{*2}$ &$[-e,e]^c$&  
$\min\{r,s\}+e$ &$0$&$2\min\{r,s\}+2e$\\ \hline
 $ \Delta\OOO^{*2}$ & $ \Delta\OOO^{*2}$ &$[-e,e]$
& $\frac12(e+r+s-\epsilon)$  & $\epsilon$ &  $e+r+s$  \\ \hline
 $ \Delta\OOO^{*2}$ & {$\mho$} &$[-\infty,-e]$&$e+r$ & $0$  & $2(e+r)$ \\ \hline
 $ \Delta\OOO^{*2}$ & {$\mho$} &$[-e,-e+2t]$& $\frac12(e+r+s-\epsilon)$ 
& $\epsilon$ &$e+r+s$\\ \hline
 $ \Delta\OOO^{*2}$ & {$\mho$} &$]-e+2t,\infty]$&$t+s$ & $1$  & $2(t+s)+1$ \\ \hline
 $\pi\OOO^{*}$ & $\pi\OOO^{*}$ &$\mathbb{R}$&$\min\{r,s\}$ & $1$  & $2\min\{r,s\}+1$ \\ \hline
 $\pi\OOO^{*}$ & $\mho$ &$\mathbb{R}$&$\min\{r,t+s\}$ & $1$  & $2\min\{r,t+s\}+1$ \\ \hline
  {$\mho$} & {$\mho$} &$[e-2u,2t-e]^c$&  $\min\{r+u,s+t\}$ &$1$&$2\min\{r+u,s+t\}+1$\\ \hline
  {$\mho$} & {$\mho$} &$[e-2u,2t-e]$ &  $\frac12(e+r+s-\epsilon)$ &$\epsilon$&$e+r+s$\\ \hline
\end{tabular}
\]\normalsize
\caption{Invariants of $\Ha$ when $k$ is dyadic. Here $w=|s-r|-e$.
 The numbers $t$, $u$, and  $\epsilon$ are defined in the text. $I^c$ is the complement of the interval $I$.
 The interval in the last line can be empty.}
\end{table}

\begin{table}[!hbp]
\scriptsize\[
\begin{tabular}[c]{|c|c|c|c|c|c|}\hline
 $\alpha \in $ & $\beta \in $ &$(r-s)\in$ & $p(\h)$ & $l(\h)$ & $d(\h)$\\ \hline
 $\OOO^{*2}$ & $\OOO^{*2}$ &$\mathbb{R}-\{0\}$
&  $\min\{r,s\}+1$ &$2|r-s|-2$&$2\max\{r,s\}$\\ \hline
 $\OOO^{*2}$ & $\OOO^{*2}$ &$\{0\}$& $r$ 
 & $1$ &  $2r+1$  \\ \hline
 $\OOO^{*2}$ & $ \Delta\OOO^{*2}$ &$[-\infty,-1]$&$r+1$ & $2(s-r-1)$  & $2s$ \\ \hline
 $\OOO^{*2}$ &  $ \Delta\OOO^{*2}$ &$\{0\}$&$r$
 & $1$  & $2r+1$ \\ \hline
 $\OOO^{*2}$ &  $ \Delta\OOO^{*2}$ &$[1,\infty]$&$s+1$ & $0$  & $2(s+1)$ \\ \hline
 $\OOO^{*2}$ & {$\pi\OOO^{*}$} &$[-\infty,1]$& $s$ & $1$ &$2s+1$\\ \hline
 $\OOO^{*2}$ &{$\pi\OOO^{*}$} &$[1,\infty]$& $r+1$ & $2(s-r)-1$ &$2s+1$\\ \hline
 $\OOO^{*2}$ & {$\mho$} &$[-\infty,-1]$&$r+1$ & $2(s-r-1)$  & $2s$ \\ \hline
 $\OOO^{*2}$ & {$\mho$} &$[0,\infty]$&$s$ & $1$  & $2s+1$ \\ \hline
 $ \Delta\OOO^{*2}$ & $ \Delta\OOO^{*2}$ &$\mathbb{R}-\{0\}$&  
$\min\{r,s\}+1$ &$0$&$2\min\{r,s\}+2$\\ \hline
 $ \Delta\OOO^{*2}$ & $ \Delta\OOO^{*2}$ &$\{0\}$
& $r$  & $1$ &  $2r+1$  \\ \hline
 $ \Delta\OOO^{*2}$ & {$\mho$} &$[-\infty,-1]$&$r+1$ & $0$  & $2(r+1)$ \\ \hline
 $ \Delta\OOO^{*2}$ & {$\mho$} &$[0,\infty]$&$s$ & $1$  & $2s+1$ \\ \hline
 $\pi\OOO^{*}$ & $\pi\OOO^{*}\cup\mho$ &$\mathbb{R}$&$\min\{r,s\}$ & $1$  & $2\min\{r,s\}+1$ \\ \hline
$\mho$ & $\mho$ &$\mathbb{R}$&$\min\{r,s\}$ & $1$  & $2\min\{r,s\}+1$ \\ \hline
\end{tabular}
\]\normalsize
\caption{Invariants of $\Ha$ when $k$ is a dyadic field that is unramified over $\mathbb{Q}_2$.}
\end{table}

\begin{figure}
\[ \includegraphics[width=1\textwidth]{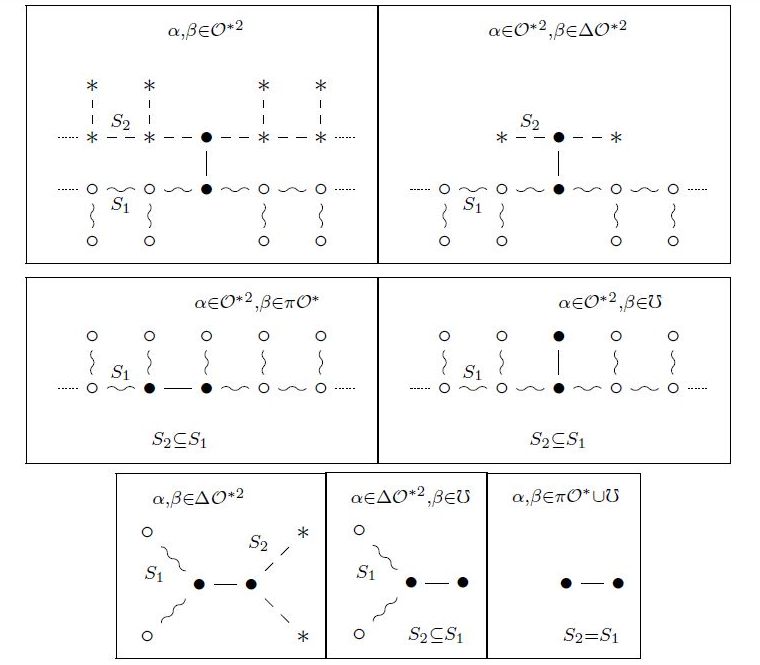}
\]
\caption{All possible configurations of the branches $S_1=S_0(i)$ and
$S_2=S_0(j)$ for an un-ramified dyadic local field. Bullets and continuous lines denote the 
intersection $S_3=S_1\cap S_2$. In the pictures we assume $q=|\oink/\pi\oink|=2$.}
\end{figure}

There is a significant number of possible configurations when $k$ is non-dyadic, and we desist
from the task of drawing them all, but we actually do it in one important case, when $k$ is
an unramified extension of $\mathbb{Q}_2$ (Table 3 and Fig. 8).
In the latter case, $e=1$, so every ramified unit has quadratic defect
$(\pi)$.
A quick glance to the case $r=s=0$ in Table 3, or Figure 8  shows the following important result:
\begin{proposition}\label{p44}
When  $k$ is a dyadic field that is unramified over $\mathbb{Q}_2$, every order of the form
$\OOO[i,j]\subseteq\mathbb{M}_2(k)$, where $ij=-ji$ and $i^2,j^2\in \oink$ are square free, is contained
in exactly $2$ maximal orders.
\end{proposition}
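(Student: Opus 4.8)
The plan is to reduce the statement to the case $r=s=0$ already visible in Table~3, and then to read off the invariants directly from the results of \S2--\S3. If $i^2$ and $j^2$ are square free, then they are either units or uniformizing parameters, so with $r=s=0$ we are exactly in the situation covered by the last five data rows of Table~3 (the rows with $r-s\in\mathbb{R}$, together with the $\{0\}$-rows of the first blocks). In every one of those rows one checks that $l(\Ha)=1$ and $p(\Ha)=0$, hence $d(\Ha)=1$; equivalently $S_0(\Ha)$ is a thick line of depth $0$ and stem length $1$, i.e.\ a single edge. The number of vertices of such a branch is $2$, which is the assertion. So the real content is: (a) verifying that each relevant table entry indeed gives $p=0$, $l=1$; and (b) justifying that Table~3 is correct in the unramified dyadic case, which in turn rests on Lemmas in \S3.

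First I would fix notation: write $s_1=p\big(S_0(i)\big)$ and $r_1=p\big(S_0(j)\big)$ for the depths of the two generating branches. By the Corollary to the pure-quaternion depth lemma, since $\alpha=i^2$ and $\beta=j^2$ are square free (hence either units or primes), we have $s_1,r_1\le v_k(2)=e=1$, with $s_1=e=1$ exactly when $k(i)$ is unramified or split, $s_1=0$ when $k(i)/k$ is ramified, and similarly for $r_1$; moreover when $\alpha$ is prime, $S_0(i)$ is a thick line with $p=0$, $l=1$. I would then split into the cases according to the types of $\alpha$ and $\beta$ and apply the appropriate intersection proposition: Proposition~\ref{inter1} (or \ref{inter2}) together with the relative-position Lemma~3.3 (the cyclic order lemma) to locate the two stems. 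The key input from Lemma~3.3 with $r=s=0$: when both $\alpha,\beta$ are units one has $v_k(2)-s_1-r_1=1-s_1-r_1$, so the stems coincide if $s_1=r_1=0$ (both ramified), meet in a point if exactly one of $s_1,r_1$ equals $1$, and lie at distance $s_1+r_1-1=1$ apart when $s_1=r_1=1$; when $\alpha$ is prime, the stem of $S_0(i)$ (a single vertex) lies on the stem of $S_0(j)$.

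Now I would run the intersection formulas through each configuration. If both extensions are ramified: $S_0(i)$ and $S_0(j)$ are edges with the same stem, both of depth $0$; their intersection by the Corollary to Lemma~\ref{lfund} (or directly Proposition~\ref{inter2} with $p_1=p_2=0$) is again that common edge, so $p=0$, $l=1$. If exactly one is ramified, say $k(i)$ ramified ($s_1=0$) and $k(j)$ unramified or split ($r_1=1$): the stems meet in a single point, $p_1=0$, $p_2=1$, and case~(2) of Proposition~\ref{inter1} with $e>0$ equal to the... here actually the stems \emph{intersect}, so one uses Proposition~\ref{inter2}, getting $p_3=\min\{0,1\}=0$ and $l_3=e+\min\{a,1\}+\min\{b,1\}$ with the overlap $e$ of length $0$ forced by "intersect in a single point", and the stem of $S_0(i)$ having length $1$ contributing; a short check gives $l_3=1$. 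If both $k(i),k(j)$ are unramified or split ($s_1=r_1=1$): the stems are at distance $1$, $p_1=p_2=1$, $e=1=|p_1-p_2|$ is the boundary case; applying Proposition~\ref{inter1}(1) gives $p_3=\big[\tfrac{1+1-1}{2}\big]=0$ and $l_3=2\{\tfrac12\}=1$. If $\alpha$ (or $\beta$) is prime, the square-free hypothesis on the other generator still gives it depth $0$ or $1$, the stem of $S_0(i)$ sits inside the stem of $S_0(j)$, and Proposition~\ref{inter2} again yields $p_3=0$, $l_3=1$ (the length-$1$ stem of $S_0(i)$ survives since $p_2\le1$). In all cases $S_0(\Ha)$ is a single edge, with exactly $2$ vertices.

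\textbf{Main obstacle.} The genuine work is the bookkeeping in the mixed cases — pinning down the exact lengths $a,b,c,d$ of the stem segments and the overlap/distance $e$ from Lemma~3.3 when one depth is $0$ and the other is $1$, and making sure the boundary case $e=|p_1-p_2|$ is handled by the right clause of Proposition~\ref{inter1}. Everything else is a direct substitution into the propositions of \S2, and indeed the whole proposition is just the $r=s=0$ specialization already tabulated, so the proof can legitimately be given as: "inspect the rows of Table~3 with $r=s=0$; in each, $l(\Ha)=1$ and $p(\Ha)=0$, so $S_0(\Ha)$ is a single edge and $\Ha$ lies in exactly two maximal orders."
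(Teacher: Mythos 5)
Your proposal is correct and follows essentially the same route as the paper, which presents Proposition \ref{p44} as an immediate consequence of inspecting the $r=s=0$ rows of Table 3 (equivalently Figure 8), all of which give $p(\Ha)=0$ and $l(\Ha)=1$. Your additional verification of those rows via the Corollary in \S 3, Lemma 3.3, and Propositions \ref{inter1}--\ref{inter2} is accurate and simply makes explicit the computations the paper delegates to the table.
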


\section{Examples and applications}

\begin{example}\label{ex1}
If $k=\mathbb{Q}_2(\sqrt{-1})$, then $\pi=1+\sqrt{-1}$ is a uniformizing parameter. The quadratic defect
of $u=1+2\pi=3+2\sqrt{-1}$ is $(\pi)^3$, since $1$ and $3$ are the only squares of units in $\oink/4\oink$.
 In Figure 11 we see the branch $S_0(i,j)$ for
$j^2=u$, and different values of $\alpha=i^2$. As usual $S_1=S_0\big(i)$ and
$S_2=S_0\big(j)$. Recall that, as $-1$ is a square in $k$,
 the quaternion algebra $\left(\frac{1+\pi,1+\pi}{k}\right)\cong
\left(\frac{1+\pi,\pi}{k}\right)$ splits.

\begin{figure}\[
\includegraphics[width=1\textwidth]{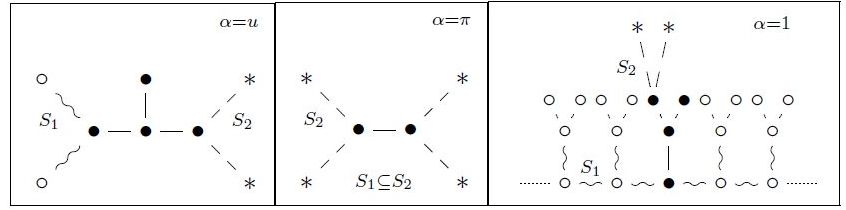}
\]
\caption{Some configurations of the branches $S_1=S_0(i)$ and
$S_2=S_0(j)$ in example \ref{ex1}.}
\end{figure}
\end{example}


\begin{example}
The tables can be used to compute the branch of an order of the form $\oink[\eta,\omega]$ for any pair of integral
elements satisfying $k[\eta]=k[i]$ and $k[\omega]=k[j]$, as long as they generate an order. 
 For example, if $k=\mathbb{Q}_2(\sqrt{-1})$,
 to find the invariants of the order generated by $\frac{i+1}{\pi}$ and $\frac{j+1}{\pi}$ when
 $i^2=j^2=u$, as in Example 4, we just set $r=s=-1$ in Table 2.
\end{example}

\begin{example}\label{ex2}
The quaternion algebra $\alge=\left(\frac{-3,-3}{\mathbb{Q}}\right)$ ramifies in exactly $2$ places, $3$ and 
$\infty$. The order $\Ha=\mathbb{Z}[i,j]\subseteq\alge$, where $ij=-ji$ and $i^2=j^2=-3$ 
is maximal outside the set $\{2,3\}$.
The local order $\mathbb{Z}_2[i,j]$ is contained in exactly $2$ maximal orders. Since the maximal order in $\alge_3$
is unique, there are exactly $2$ global maximal orders, $\Da$ and $\Da'$, containing $\Ha$. In fact,
 $\Da=\enteri[i,\omega,\nu]$ and $\Da'=\enteri[\eta,j,\nu]$, where $j=2\omega+1$, $i=2\eta+1$, and $3\nu=ij$. 
It can be proved easily, using the fact that these orders are neighbors at $2$, that all maximal
orders in $\alge$ are conjugate. See \cite{nocubic} for details.
\end{example}

\subsection{Spinor image Computations and representation fields.}
Here we recall the basic facts in the theory of representation fields, see \cite{abelianos} or \cite{cyclic}  for details.
The set of spinor genera in a genus $\mathbb{O}=\mathrm{gen}(\Da)$ 
of Eichler orders in a quaternion algebra $\alge$
 over a global field $K$ equals $[\Sigma:K]$, where $\Sigma=\Sigma(\mathbb{O})$, the spinor class field, is the
class field corresponding to the class group $K^*H(\Da)$, where
$$H(\Da)=\{N(a)|a\in\alge_\ad,\ a\Da_\ad a^{-1}=\Da_\ad\},$$
if $N$ denotes the adelic reduced norm, while $\alge_\ad$ and $\Da_\ad$ are the adelizations 
of the algebra $\alge$ and the order $\Da$. The field $\Sigma$ can also be described as the largest abelian
extension of $K$ ramifying only at real places that are ramified for $\alge$, and splitting at every 
finite place $\wp$ satisfying either of the following conditions:
\begin{itemize}
 \item $\alge$ is ramified at $\wp$, or
\item the level of $\Da$ at $\wp$ is odd.
\end{itemize} 
 The number of  spinor genera representing a given order
$\Ha$ equals $[\Sigma:F]$, where $F$ is the representation field, i.e.,  the
class field corresponding to the group $K^*H(\Da|\Ha)$, where
$$H(\Da|\Ha)=\{N(a)|a\in\alge_\ad,\ a\Ha_\ad a^{-1}\subseteq \Da_\ad\}.$$
For every finite place $\wp$ it is easy to see that the local component $H_\wp(\Da|\Ha)$, which is defined analogously,
is either $K_\wp^*$ or $\oink_\wp^*K_\wp^{*2}$. In fact $H_\wp(\Da|\Ha)=\oink_\wp^*K_\wp^{*2}$
 if and only if the level of $\Da$ at $\wp$ is even and equals the local diameter $d(\Ha_\wp)$.

An order $\Ha$ embeds into every spinor genera of maximal orders if and only if $F=K$. When this is not the case,
we say that $\Ha$ is selective \cite{FriedmannQ}, \cite{lino1}.

\begin{example}\label{ex2}
Let $K=\mathbb{Q}(\sqrt{-15})$, whose Hilbert class field is $\Sigma=\mathbb{Q}(\sqrt{-3},\sqrt5)$ 
\cite[p. 262]{cohn}. In particular, there are exactly $2$ conjugacy classes of maximal orders in $\mathbb{M}_2(K)$.
Note that $3$ and $5$ ramify on $K/\mathbb{Q}$. Let  $\mathbf{3}_0$ and $\mathbf{5}_0$ denote the corresponding places of $K$, which are inert on $\Sigma/K$. On the other hand, $2$ splits on $K/\mathbb{Q}$. The two dyadic places
$\mathbf{2}_1$ and $\mathbf{2}_2$ are inert on $\Sigma/K$. Set $\Ha=\OOO[i,j]$, where $ij=-ji$, while
$i^2=-3$ and $j^2=-15$. Then, according to the tables, the invariants of the local branches of $\Ha$ are as follows:
$$\Big(d(\Ha_{\wp}),p(\Ha_{\wp}),l(\Ha_{\wp})\Big)=\left\{\begin{array}{rcl}
(2,1,0)&\textnormal{ if }&\wp=\mathbf{3}_0\\
(2,0,2)&\textnormal{ if }&\wp=\mathbf{5}_0\\
(1,0,1)&\textnormal{ if }&\wp\in\{\mathbf{2}_1,\mathbf{2}_2\} \\
(0,0,0)&&\textnormal{otherwise}\end{array}\right..$$
It follows that $\Ha$ embeds into an Eichler order of every level dividing $(30)$. Furthermore, since
the Frobenius element at each of the relevant places
$$|[\mathbf{3}_0,\Sigma/K]|=|[\mathbf{5}_0,\Sigma/K]|=|[\mathbf{2}_1,\Sigma/K]|=|[\mathbf{2}_2,\Sigma/K]|$$
is the nontrivial element in $\mathrm{Gal}(\Sigma/K)$, there is a unique conjugacy class of Eichler orders of level $(30)$,
but $2$ conjugacy classes of Eichler orders of level $(15)$ and $\Ha$ embeds in both of them. On the other hand, for the order
$\Ha'=\OOO[i,2j]$, the local invariants are:
$$\Big(d(\Ha'_{\wp}),p(\Ha'_{\wp}),l(\Ha'_{\wp})\Big)=\left\{\begin{array}{rcl}
(2,1,0)&\textnormal{ if }&\wp\in\{\mathbf{3}_0\mathbf{2}_1,\mathbf{2}_2\}\\
(2,0,2)&\textnormal{ if }&\wp=\mathbf{5}_0\\
(0,0,0)&&\textnormal{otherwise}\end{array}\right.,$$
whence $\Ha'$ is selective on the genus of Eichler orders of level $(60)$.
\end{example}

\subsection{Number of maximal orders containing a given order.} It is not hard to show by a simple inductive argument
that the number of maximal orders in a thick line $S$ with invariants $p(S)$ and $l(S)$ is 
$$\aleph(S)=[l(S)+1]q^{p(s)}+2\frac{q^{p(s)}-1}{q-1},$$ where $q$ is the number of elements in the residual field.
It follows that the number of maximal orders containing an order $\Ha$ of maximal rank, in a quaternion algebra over a global field $F$, is
$\prod_{\wp}\aleph\big(S_0(\Ha_\wp)\big)$, where the product is taken over all finite places of $F$.
 Note that almost every factor in this product is one.
An important case, which is straightforward from Proposition \ref{p44} is next result:
\begin{proposition}
If $F/\mathbb{Q}$ is unramified at $2$, and $a,b\in\mathcal{O}_F$ are square free, the number of maximal orders in
an $F$-algebra  $\alge\cong\left(\frac{a,b}{F}\right)$ containing the order $\Ha=\mathcal{O}_F[i,j]$,
 where $ij=-ji$, $i^2=a$, and $j^2=b$, is $2^T$, where $T$
is the number of un-ramified places dividing $2ab$.
\end{proposition}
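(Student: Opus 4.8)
The plan is to reduce the global count to a product of local counts and then evaluate each local factor using the formula $\aleph(S)$ together with Proposition \ref{p44}. First I would invoke the identity, stated just above, that the number of maximal orders containing an order $\Ha$ of maximal rank in a quaternion algebra over a global field equals $\prod_\wp \aleph\big(S_0(\Ha_\wp)\big)$, the product running over finite places. Since $\Ha = \mathcal{O}_F[i,j]$ is generated by a pair of orthogonal pure quaternions with $i^2=a$, $j^2=b$ square free, I would note that $\Ha$ has maximal rank, and that for almost all $\wp$ the local order $\Ha_\wp$ is already maximal (indeed $\mathcal{O}_{F,\wp}$-free of rank $4$ and stable), so $\aleph\big(S_0(\Ha_\wp)\big)=1$ there; the only places that can contribute a nontrivial factor are those dividing $2ab$. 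This isolates the finitely many relevant factors.

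Next I would split the relevant places into dyadic and non-dyadic ones. For a dyadic place $\wp$ — necessarily unramified over $\mathbb{Q}_2$ by hypothesis on $F$ — Proposition \ref{p44} gives directly that $\Ha_\wp$ lies in exactly $2$ maximal orders, so $\aleph\big(S_0(\Ha_\wp)\big)=2$. For a non-dyadic place $\wp$ dividing $ab$, I would read off the local invariants from Table 1 with $r=s=0$: since $a,b$ are square free, the possible local behaviors are $a,b\in\mathcal{O}_\wp^{*2}$ (giving depth $0$, length $0$ unless $\wp\nmid ab$), $a$ or $b\in\Delta\mathcal{O}_\wp^*{}^2$, or $a$ or $b\in\pi\mathcal{O}_\wp^*$. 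Checking the rows of Table 1 with $r-s=0$, whenever $\wp$ actually divides $ab$ (so at least one of $a,b$ is a non-unit or a nonsquare unit at $\wp$) one finds $p(\Ha_\wp)=0$ and $l(\Ha_\wp)\in\{0,1\}$; plugging into $\aleph(S)=[l(S)+1]q^{p(S)}+2\frac{q^{p(S)}-1}{q-1}$ with $p=0$ gives $\aleph = l(S)+1 \in\{1,2\}$. One then verifies that the factor is $2$ precisely at the \emph{ramified} places dividing $2ab$ — i.e. the places where $\alge$ ramifies, equivalently where the local Hilbert symbol $\left(\frac{a,b}{F_\wp}\right)$ is a division algebra — and is $1$ at the unramified ones. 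Wait: the statement asks for $2^T$ with $T$ the number of \emph{unramified} places dividing $2ab$, so I must instead confirm the factor is $2$ at unramified places dividing $2ab$ and $1$ at the ramified ones.

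So the decisive bookkeeping step is: at an unramified place $\wp \mid 2ab$ the algebra $\alge_\wp$ splits, $\Ha_\wp$ sits in exactly two maximal orders (dyadic case by Proposition \ref{p44}; non-dyadic case by the Table 1 rows yielding $l=1$, $p=0$, hence $\aleph=2$), while at a ramified place $\wp\mid 2ab$ the maximal order of $\alge_\wp$ is unique, so $\aleph\big(S_0(\Ha_\wp)\big)=1$ trivially — and at $\wp\nmid 2ab$ the order is maximal with $\aleph=1$. Multiplying, $\prod_\wp \aleph\big(S_0(\Ha_\wp)\big) = 2^T$ with $T$ the number of unramified places dividing $2ab$. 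The main obstacle I anticipate is the careful case analysis at non-dyadic ramified versus unramified places: one must check that $\Ha_\wp$ really is contained in more than one maximal order exactly when $\alge_\wp$ splits, ruling out (using square-freeness of $a,b$) the split-but-$l=0$ situation, and confirming that the division-algebra places each force uniqueness; the Hilbert symbol computation $\left(\frac{\Delta,\pi}{k}\right)$ being a division algebra, noted in the text around Table 1, is exactly the fact that makes the relevant Table 1 row with $l=0$ not occur for a place dividing $ab$ with the algebra split. Everything else is a direct substitution into the formula for $\aleph(S)$ and the observation that cofinitely many factors are $1$.
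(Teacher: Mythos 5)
Your argument is correct and follows exactly the route the paper intends: the paper offers no written proof beyond calling the result ``straightforward from Proposition \ref{p44}'' and the product formula $\prod_{\wp}\aleph\big(S_0(\Ha_\wp)\big)$, and you supply precisely the missing bookkeeping --- factor $1$ at places not dividing $2ab$ and at places where $\alge$ ramifies (unique maximal order), factor $2$ at split dyadic places by Proposition \ref{p44} and at split non-dyadic places dividing $ab$ by the Table 1 rows with $l=1$, $p=0$ (the $(\Delta,\pi)$ row being excluded since that symbol is a division algebra). Your mid-proof self-correction lands on the right reading of ``unramified'' (places where $\alge$ splits), consistent with Example 5.4, so the count $2^T$ follows.
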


\subsection{Set of fixed points for groups of Moebius transformations.}
For any non-archimedean local field $k$, the group of Moebius transformations $\mathrm{PGL}_2(k)$
 acts on the vertices of
the BT-tree by conjugation. The stabilizer of a maximal order $\Da$ is the group $k^*\Da^*$. Note that an element
$u$ is a unit in some maximal order if and only if its norm is a unit and its trace is an integer,  and in this case it is a unit in every order containing it. It follows that the class $\bar u\in  \mathrm{PGL}_2(k)$ of a unit $u\in\mathrm{GL}_2(k)=\matrici_2(k)^*$ stabilizes every element in $S_0(u)$. On the other hand, if
$\bar u\in  \mathrm{PGL}_2(k)$ is not the class of a unit, then $\bar u$ cannot leave any vertex invariant. 
 In particular, if a subgroup of $\mathrm{PGL}_2(k)$ has a non-empty invariant locus in the BT-tree, then this locus is a set in the family that we described in \S1. The results in this work allow us to compute this invariant locus for isomorphic copies of the Klein group inside $\mathrm{PGL}_2(k)$.

\subsection{Generating sets of orders.}
 It is apparent from Propositions 2.1-2.4 that whenever the intersection of two branches has a stem of length 2 or higher,
this stem is contained in the stem of one of the intersecting branches. As a consequence, we conclude that, if $\Ha=\OOO[a_1,\dots,a_n]$ is an order whose branch has a stem of length 2 or higher, then at least one of the generators $a_i$ spans
an algebra isomorphic to $k\times k$. The converse is, however, not true since the conclusion holds for an Eichler order
$\Da$ of level $1$. In fact, $\Da/\pi\Da\cong(\mathbb{F}_k\times\mathbb{F}_k)\oplus R$, where $R$ is a radical and $\mathbb{F}_k$ is the residue field of $k$. Note that any generating set of this algebra must contain an element whose projection to $\mathbb{F}_k\times\mathbb{F}_k$ is a generator of the latter algebra.

\section{Acknowledgements}

The first author was supported by Fondecyt, Grant number 1140533. The second author was partly supported by
Fondecyt,  Grant number 1100127.


\begin{thebibliography}{xx}






\bibitem{abelianos}
{\sc L. Arenas-Carmona}, \textit{Representation fields for
commutative orders}, Ann.
Inst. Fourier  \textbf{62}  (2012), 807-819.

\bibitem{cyclic}
{\sc L. Arenas-Carmona}, \textit{Representation fields for
cyclic orders}. Acta Arith. \textbf{156}  (2012), 143-156.

\bibitem{eichler2}
{\sc L. Arenas-Carmona}, \textit{Eichler orders, trees and 
representation fields}. Int. J. Number Theory, \textbf{9} (2013), 1725-1741.

\bibitem{nocubic}
{\sc L. Arenas-Carmona}, \textit{Roots of unity in definite quaternion orders}, Cornell U. Lib.
http://arxiv.org/pdf/1404.3244v1.pdf  (2014) Accessed 25 April 2014.



\bibitem{FriedmannQ}
{\sc T. Chinburg} and {\sc E. Friedman}, \textit{An embedding
theorem for quaternion algebras}, J. London Math. Soc.
\textbf{60.2} (1999), 33-44.

\bibitem{cohn}
{\sc H. Cohn}, \textit{Advanced Number Theory}, Dover Pub., New York, 1980.



\bibitem{lino1}
{\sc B. Linowitz}, \textit{Selectivity in quaternion algebras},
J. Number Theory \textbf{132} (2012), 1425-1437.


\bibitem{ohm}
{\scshape O.T., O'Meara}, \textit{Introduction to quadratic forms}, Springer, Berlin, 1963.

\bibitem{trees}
{\scshape J.-P. Serre},  \textit{Trees}, Springer-Verlag, Berlin,
1980.

\bibitem{tu}
{\sc F.-T. Tu}, \textit{On orders of $M(2,K)$ over a non-archimedean local field}, Int. J. Number Theory \textbf{7}  (2011), 1137-1149.

\bibitem{vigneras}
{\scshape M.-F. Vigneras}, \textit{Arithm\'etique des alg\`ebres
de quaternions}, Springer-Verlag, Berlin, 1980.



\end{thebibliography}
\end{document}